\documentclass[12pt]{amsart}

\usepackage{amsfonts}
\usepackage{amsmath}
\usepackage{amssymb}
\usepackage{amsthm}
\usepackage{enumerate} 
\usepackage{hyperref}
\usepackage[margin=1.2in]{geometry}
\usepackage{graphicx} 
\usepackage{todonotes}
\usepackage{xcolor}

\theoremstyle{plain}
\newtheorem{theorem}{Theorem}[section]
\newtheorem{lemma}[theorem]{Lemma}
\newtheorem{proposition}[theorem]{Proposition}
\newtheorem{corollary}[theorem]{Corollary}

\theoremstyle{definition}
\newtheorem{definition}[theorem]{Definition}

\newtheorem{remark}[theorem]{Remark}

\newtheorem{example}[theorem]{Example}
\newtheorem{examples}[theorem]{Examples}

\numberwithin{equation}{section}

\newcommand\N{\mathbb{N}}
\newcommand\Z{\mathbb{Z}}
\newcommand\R{\mathbb{R}}

\newcommand\C{\mathbb{C}}

\newcommand\D{\mathcal{D}}

\renewcommand\S{\mathcal{S}}

\newcommand\B{\mathcal{B}}

\newcommand\ev[2]{\langle#1,#2\rangle}

\DeclareMathOperator\id{id}


\begin{document}

\title[Sequence space representations for $\D_E$ and $\D'_E$]{Sequence space representations for translation-modulation invariant function and distribution spaces}

\author[A. Debrouwere]{Andreas Debrouwere}
\address{Department of Mathematics and Data Science \\ Vrije Universiteit Brussel, Belgium\\ Pleinlaan 2 \\ 1050 Brussels \\ Belgium \newline
\indent \emph{ORCID:} 0000-0003-4416-5758
}
\email{andreas.debrouwere@vub.be}

\author[L. Neyt]{Lenny Neyt}
\thanks{L. Neyt gratefully acknowledges support by FWO-Vlaanderen through the postdoctoral grant 12ZG921N}
\address{Department of Mathematics: Analysis, Logic and Discrete Mathematics\\ Ghent University\\ Krijgslaan 281\\ 9000 Gent\\ Belgium \newline
\indent \emph{ORCID:} 0000-0001-8116-1487}
\email{lenny.neyt@UGent.be}

\subjclass[2010]{\emph{Primary.} 46F05, 46A45. \emph{Secondary.} 81S30}
\keywords{Sequence space representations, function and distribution spaces, translation-modulation invariant Banach spaces of distributions, Gabor frames.}

\begin{abstract}
We provide sequence space representations for  the test function space $\D_{E}$ and the distribution space $\D^{\prime}_{E}$ associated to a Banach space $E$ belonging to a broad class of translation-modulation invariant Banach spaces of distributions. The spaces $\D_{E}$ and $\D^{\prime}_{E}$ generalize the classical Schwartz spaces $\D_{L^p}$ and $\D^{\prime}_{L^p}$, respectively.
Our proof is based on Gabor frame characterizations of $\D_{E}$ and $\D^{\prime}_{E}$, which are also established here and are of independent interest.  We recover in a unified way some known sequence space representations 
as well as obtain several new ones.  
\end{abstract}
\maketitle

\section{Introduction}
The representation of test function and distribution spaces by sequence spaces is a central topic in functional analysis that goes back to the pioneering work of Valdivia and Vogt  \cite{V-TopicLCS, V-SeqSpRepSpTestFuncDist} and is closely connected to the isomorphic classification of such spaces. 

With regard to the convolution of general distributions, Schwartz \cite{Schwartz} introduced the test function spaces 
$$
\D_{L^p}  = \{ f \in C^\infty(\R^{n}) \mid f^{(\alpha)} \in L^p , ~ \forall \alpha \in \N^{n} \}, \qquad 1 \leq p \leq \infty,   
$$
$$
\dot{\mathcal{B}}  = \{ f \in C^\infty(\R^{n}) \mid \lim_{|x| \to \infty}f^{(\alpha)}(x) = 0, ~ \forall \alpha \in \N^{n} \}, \qquad 
$$
equipped with their natural Fr\'echet space topology, and the distribution spaces ($q$ denotes the conjugate index of $p$)
$$
\D'_{L^1} = (\dot{\mathcal{B}})', \qquad \D'_{L^p} = (\D_{L^q})', \qquad  1 < p \leq \infty, 
$$ 
endowed with their strong dual  topology. Following Schwartz, we write $\D_{L^\infty} = \mathcal{B}$ and  $\D'_{L^\infty} = \mathcal{B}'$ from now on. The space  $\dot{\mathcal{B}}'$ is defined as the closure of the space of compactly supported distributions in $\B'$. There has been a considerable interest in sequence space representations for these spaces. 
Valdivia  \cite{Valdivia} and Vogt  \cite{V-SeqSpRepSpTestFuncDist} independently showed that $\D_{L^p} \cong \ell^p \widehat{\otimes} s$, where 
\[ s = \{ c = (c_k)_{k \in \Z^n} \in \C^{\Z^n} \mid \sup_{k \in \Z^n}(1 + |k|)^{m} |c_{k}| < \infty, ~ \forall m \in \N \} \] 
denotes the Fr\'echet space of rapidly decreasing sequences. Vogt \cite{V-SeqSpRepSpTestFuncDist} also proved that  $\B \cong \ell^\infty \widehat{\otimes} s$ and  $\dot{\B} \cong c_0 \widehat{\otimes} s$. By standard duality arguments, these results imply that  $\D'_{L^{p}} \cong \ell^p \widehat{\otimes} s'$ and $\B' \cong \ell^\infty \widehat{\otimes} s'$, where $s'$ stands for the strong dual of $s$. More recently, Bargetz \cite{B-ComplVVTab}  showed that $\dot{\B}^{\prime}  \cong c_0 \widehat{\otimes} s'$. The original proofs of these isomorphisms are non-constructive as they are based on the Pe\l czy\'nski decomposition method  (in the version of Vogt \cite{V-SeqSpRepSpTestFuncDist}). We refer to \cite{B-D-N-SeqSpRepSmoothFuncDistWilsonBases,O-W} for explicit and constructive sequence space representations of these spaces.

The main goal of this article is to unify and generalize the above results by providing sequence space representations for  test function and distribution spaces defined via a broad class of translation-modulation invariant Banach spaces as introduced in \cite{D-P-P-V-TMIBUltraDist}.

A Banach space $E$ is said to be a \emph{translation-modulation invariant Banach space of (tempered) distributions} (TMIB) on $\R^{n}$ if $E$ satisfies the dense continuous inclusions  $\S(\R^{n}) \hookrightarrow E \hookrightarrow \S'(\R^{n})$, $E$ is translation and modulation invariant, and the operator norms of the translation and modulation operators on $E$ are polynomially bounded. Here, $\S(\R^{n})$ denotes the Fr\'echet space of rapidly decreasing smooth functions and $\S^{\prime}(\R^{n}) =(\S(\R^{n}))'$ the space of tempered distributions endowed with its strong dual topology \cite{Schwartz}. A Banach space $E$ is called a dual TMIB (DTMIB) if it is the strong dual of a TMIB. Classical examples of TMIB and DTMIB are the Lebesgue spaces $L^p$, $1 \leq p \leq \infty$, the space $C_{0}$ of continuous functions vanishing at infinity, the mixed-norm spaces $L^{p_1}(L^{p_2})$, $1 \leq p_1, p_2 < \infty$, the Sobolev spaces $\mathcal{F}L^p$, Wiener amalgam spaces, and the weighted variants of all these spaces.  Following  \cite{F-G-BSpIntegrGroupRepAtomicDecomp}, we call a Banach space $X$ a \emph{solid Banach function space}  (for short \emph{solid}) if $X \subset L^1_{\operatorname{loc}}(\R^n)$ with continuous inclusion and 
	\[ \forall f \in X, g \in L^1_{\operatorname{loc}}(\R^n) \, : \; |g| \leq |f| \mbox{ a.e.} \Rightarrow g \in X \mbox{ and } \|g\|_X \leq \| f \|_X . \]
Interestingly, TMIB and DTMIB are not necessarily solid, even if they consist of locally integrable functions.
A natural example of a non-solid TMIB consisting of locally integrable functions is given by $L^p \widehat{\otimes}_\pi L^p$, $1< p \leq 2$ \cite[Remark 3.10]{D-P-P-V-TMIBUltraDist}. We refer to \cite{D-P-P-V-TMIBUltraDist}  for a systematic study of TMIB and DTMIB.

Let $E$ be a TMIB or DTMIB on $\R^n$. We define 
 	\[ \D_{E} = \{ f \in \S^{\prime}(\R^{n}) \mid f^{(\alpha)} \in E , ~ \forall \alpha \in \N^{n} \} \]
	and
\[ \D^{\prime}_{E} = \{ f \in \S^{\prime}(\R^{n}) \mid f * \varphi \in E , ~ \forall \varphi \in \S(\R^{n}) \}, \]
and endow these spaces with their natural locally convex topologies (see Subsection \ref{subsect:testdist} for details). We remark that $\D_{E}$ was introduced in \cite{D-P-V}, while $\D'_{E}$ is considered here for the first time. The definition of $\D'_{E}$ is inspired by the fact that, if $E = E'_0$ is a DTMIB, it holds that 
 $(\D_{E_0})' = \D'_{E}$
as vector spaces, see  \cite[Theorem 3]{D-P-V}. For $E_0 = L^p$, $1 \leq p < \infty$, or $C_0$ this is a classical result of Schwartz  \cite[p.\ 201, Theor\'eme XXV]{Schwartz}. 

 In the main result of this paper (Theorem \ref{main}), we obtain sequence space representations for the spaces $\D_{E}$ and $\D'_{E}$. To this end,  we consider the discrete space associated to $E$ as recently introduced in \cite{D-P-GaborFrameCharGenModSp}
 (see Definition \ref{def-discrete}); this notion is heavily inspired by \cite[Definition 3.4]{F-G-BSpIntegrGroupRepAtomicDecomp}, where a discrete space is associated to each  translation invariant solid Banach function space. The proof of our main result is based on Vogt's variant of the Pe\l czy\'nski decomposition method  \cite{V-SeqSpRepSpTestFuncDist} and discrete characterizations of the spaces  $\D_{E}$ and $\D'_{E}$ in terms of Gabor frames \cite{G-FoundationsTimeFreqAnal}. The latter is the technical core of our work  and is of independent interest. We would like to point out that the  method of Vogt \cite[Theorem 3.1]{V-SeqSpRepSpTestFuncDist} for obtaining sequence space representations of $L^p$-weighted spaces of smooth functions does not seem to be applicable in our general situation, mainly due to the absence of solidity assumptions on $E$.

Our work delivers a new and unified proof of the sequence  space representations of the spaces $\D_{L^p}, \mathcal{B}, \dot{\mathcal{B}}$ and $\D'_{L^p}, \mathcal{B'}, \dot{\mathcal{B}}'$ (see Example \ref{examples-main}) but also gives several new ones. Let us present here two examples: 

\noindent $(i)$  For $1 \leq p_1, p_2 < \infty$ we have that
$$
\mathcal{D}_{L^{p_1} \widehat{\otimes}_\pi L^{p_2}} \cong 
\ell^{p_1} \widehat{\otimes}_\pi  \ell^{p_2} \widehat{\otimes}_\pi s.
$$ 
$(ii)$ For $1 \leq p \leq \infty$ and $r \in \R$ we define the Banach space
$$
\mathcal{F}L^p_{(1+|\, \cdot \,|)^r } =  \{ f \in \S'(\R^n) \, | \,  \ \| \mathcal{F}^{-1}(f)(\xi)(1+|\xi|)^r \|_{L^p} < \infty \}.
$$
Consider the projective and inductive type Sobolev spaces 
\begin{equation}
\label{sobolev-def}
\mathcal{F}L^p_{\infty} = \varprojlim_{m \in \N} \mathcal{F}L^p_{(1+|\, \cdot \,|)^m}, \qquad \mathcal{F}L^p_{- \infty} = \varinjlim_{m \in \N} \mathcal{F}L^p_{(1+|\, \cdot \,|)^{-m}}.
\end{equation}
Then,
\begin{equation}
	\label{sobolev-seq}
\mathcal{F}L^p_{ \infty}  \cong b^p  \widehat{\otimes} s, \qquad \mathcal{F}L^p_{ -\infty}  \cong b^p  \widehat{\otimes} s',
\end{equation}
where 
$$
b^p = \{ c =(c_j)_{j \in \Z^n} \, | \, \sum_{j \in \Z^n} c_j e^{2\pi i j \cdot x  } \in L^p(\R^n/ \Z^n) \}
$$
with norm $\| c \|_{b^p} = \left \|  \sum_{j \in \Z^n} c_j e^{2\pi i j \cdot x  }  \right \|_{L^p(\R^n/ \Z^n)}$. 

As an application of our results, we study in  Section \ref{sect-iso}  the isomorphic structure of the  spaces $\D_{L^{p_1} \widehat{\otimes}_\pi L^{p_2}}$. We show that these spaces are genuinely new and distinct in the following sense (see Proposition \ref{isotensor-2}):  For $1 < p_1, p_2 < \infty$ we have that 
\begin{itemize}
\item[$(i)$]  $\D_{L^{p_1} \widehat{\otimes}_\pi L^{p_2}}$ is not isomorphic to a complemented subspace of $\mathcal{D}_{L^{q_1}(L^{q_2})}$, $1 \leq q_1, q_2 < \infty$, $\dot{\mathcal{B}}$, or $\mathcal{B}$.
\item[$(ii)$] Let $1 < q_1, q_2 < \infty$.  Then, $\D_{L^{p_1} \widehat{\otimes}_\pi L^{p_2}} \cong \D_{L^{q_1} \widehat{\otimes}_\pi L^{q_2}}$ if and only if $\{p_1,p_2\} = \{q_1, q_2\}$.
\end{itemize}
By using the sequence space representations of the involved spaces and a functional analytic lemma from \cite{Diaz}, this will follow from some deep results about the isomorphic structure of the spaces $\ell^{p_1} \widehat{\otimes}_\pi  \ell^{p_2}$,  $\ell^{p_1}(\ell^{p_2})$,  $\ell^p$, and $c_0$ \cite{AK, CM,GL}.

This article is organized as follows. In the preliminary Sections \ref{sect-prelim} and \ref{sect-TMIB} we fix the notation and recall several results concerning TMIB and DTMIB \cite{D-P-P-V-TMIBUltraDist} and their associated discrete spaces from \cite{D-P-GaborFrameCharGenModSp}. We also formally introduce the spaces $\D_E$ and $\D'_E$ here. In  Section \ref{sect-stat} we state our main result, namely, the sequence space representations for  $\D_E$ and $\D'_E$, give some consequences of it about the linear topological structure of these spaces, and provide several examples. The Gabor frame characterizations of $\D_E$ and $\D'_E$, and the proof of the main result are given in Section \ref{sec:GaborFrameChar}. Finally, in Section \ref{sect-iso}, we discuss the  isomorphic structure of the  spaces $\D_{L^{p_1} \widehat{\otimes}_\pi L^{p_2}}$.

\section{Preliminaries} \label{sect-prelim}
Given an open set $U \subseteq \R^{n}$, we denote by $\D(U)$ the space of all smooth functions with compact support contained in $U$. We write $\S(\R^{n})$ for the Fr\'echet space of rapidly decreasing smooth functions on $\R^{n}$  and endow it with the following family of norms:
	\[ \|\varphi\|_{S^{N}} := \sup_{|\alpha| \leq N} \sup_{x \in \R^{n}} |\varphi^{(\alpha)}(x)| (1 + |x|)^{N} , \qquad N \in \N . \]
Its strong dual $\S^{\prime}(\R^{n})$ is the space of tempered distributions.  We denote the translation and modulation operators on $\S^{\prime}(\R^{n})$ by $T_{x} f(t) = f(t- x)$ and $M_{\xi} f(t) = e^{2 \pi i \xi \cdot t} f(t)$, $x, \xi \in \R^{n}$. We also set $\check{f}(t) = f(-t)$.  We fix the constants in the Fourier transform as follows
$$
\mathcal{F}(\varphi)(\xi) = \int_{\R^n} \varphi(x) e^{-2\pi i \xi \cdot x} dx, \qquad \varphi \in \S(\R^{n}).
$$
The Fourier transform is an isomorphism  from $\S(\R^{n})$ onto itself and extends by duality to an isomorphism from $\S'(\R^{n})$ onto itself. 

By a lattice $\Lambda$ in $\R^n$ we mean a discrete subgroup of $\R^{n}$ that spans the whole of $\R^{n}$. There is a unique invertible $n \times n$-matrix $A_{\Lambda}$ such that $\Lambda = A_{\Lambda} \Z^{n}$. A measurable function $w: \R^n \to (0,\infty)$ is called a polynomially bounded weight function on $\R^n$ if there are $C,\tau > 0$ such that
$$
w(x+y) \leq Cw(x)(1+|y|)^{\tau}, \qquad x,y \in \R^n.
$$
Let $X$ be a Banach space. For $1 \leq p \leq \infty$ we define the Banach space
$$
\ell^p_w(\Lambda;X) = \{ x = (x_\lambda)_{\lambda \in \Lambda} \in X^\Lambda \, | \, \|x\|_{\ell^p_w(\Lambda;X)} = \| \|x_\lambda \|_X w(\lambda)\|_{\ell^p} < \infty \}.
$$
Furthermore, we define  $c_{0,w}(\Lambda; X)$ as the closed subspace of $\ell^{\infty}_w(\Lambda;X)$ consisting of all $x\in\ell^{\infty}_w(\Lambda;X)$ satisfying the following property: For every $\varepsilon>0$ there is a finite subset $\Lambda^{(0)}$ of  $\Lambda$ such that $\sup_{\lambda\in\Lambda\backslash\Lambda^{(0)}}\|x_{\lambda}\|_X w(\lambda)\leq \varepsilon$. For $X = \C$ we simply write $\ell^p_w(\Lambda;\C) = \ell^p_w(\Lambda)$ and  $c_{0,w}(\Lambda; \C) = c_{0,w}(\Lambda)$. 

Next, we define
\[ s(\Lambda;X) = \varprojlim_{N \in \N} \ell^\infty_{(1+|\, \cdot \, |)^N}(\Lambda;X)  \quad \text{and} \quad s^{\prime}(\Lambda;X) = \varinjlim_{N \in \N} \ell^\infty_{(1+|\, \cdot \, |)^{-N}}(\Lambda;X). \]
Then, $s(\Lambda;X)$ is a Fr\'echet space, while $s'(\Lambda;X)$ is an $(LB)$-space (i.e. a countable inductive limit of Banach spaces). For $X = \C$ we again simply write $s(\Lambda;\C) = s(\Lambda)$ and $s'(\Lambda;\C) = s'(\Lambda)$. Note that, for any lattice $\Lambda$, $s(\Lambda)$ and $s'(\Lambda)$ are topologically isomorphic to $s = s(\Z^{n})$ and $s' = s'(\Z^{n})$, respectively. 
Finally,  we have the canonical isomorphisms
	\begin{equation}
	s(\Lambda;X) \cong s(\Lambda) \widehat{\otimes} X
	\label{tensor-1} 
	\end{equation}
and
\begin{equation}
	 s^{\prime}(\Lambda;X) \cong s^{\prime}(\Lambda) \widehat{\otimes} X,
	 \label{tensor-2} 
	\end{equation}
where the completion of the tensor products may be taken with respect to either the $\pi$- or the $\varepsilon$-topology, which amounts to the same in view of the nuclearity of $s(\Lambda)$ and $s'(\Lambda)$. The equality in \eqref{tensor-1} can be shown as follows
$$
s(\Lambda;X) = \varprojlim_{N \in \N} c_{0,(1+|\, \cdot \, |)^N}(\Lambda;X) = \varprojlim_{N \in \N} c_{0,(1+|\, \cdot \, |)^N}(\Lambda) \widehat{\otimes}_\varepsilon X =  s(\Lambda) \widehat{\otimes} X,
$$
while \eqref{tensor-2} is a consequence of \cite[Theorem 3.1(d)]{B-M-S}. We refer to \cite{G-ProdTensTopEspNucl, Ryan} for more information on completed tensor products.

\section{Translation-modulation invariant Banach spaces of distributions and their duals}\label{sect-TMIB}
\subsection{Definition and basic properties} We start with the following basic  definition from \cite{D-P-P-V-TMIBUltraDist}.
\begin{definition} \label{Def-1} A Banach space $E$ is called a \emph{translation-modulation invariant Banach space of distributions} (TMIB) on $\R^n$ if the following three conditions hold:
\begin{itemize}
\item[$(i)$] $E$ satisfies the dense continuous inclusions $\mathcal{S}(\R^n) \hookrightarrow E \hookrightarrow \mathcal{S}'(\R^n)$.
\item[$(ii)$] $T_x(E) \subseteq E$ and $M_\xi(E) \subseteq E$ for all $x, \xi \in \R^n$.
\end{itemize}
\begin{itemize}
\item[$(iii)$] There exist $\tau_j, C_j > 0$, $j=0,1$, such that
\begin{equation}
 \| T_{x} \|_{\mathcal{L}(E)} \leq C_0(1+|x|)^{\tau_0} \quad \mbox{and} \quad \| M_{\xi} \|_{\mathcal{L}(E)} \leq  C_1(1+|\xi|)^{\tau_1} ,
\label{twf}
\end{equation}
for $x, \xi \in \R^n$ fixed; here we used the fact that the mappings $T_x: E \rightarrow E$ and $M_\xi: E \rightarrow E$  are continuous, as follows from the closed graph theorem.
\end{itemize}
A Banach space $E$ is called a  \emph{dual translation-modulation invariant Banach space of distributions} (DTMIB) on $\R^n$ if it is the strong dual of a TMIB on $\R^n$.
\end{definition}

\begin{remark} 
Let $E$ be a DTMIB. Then, $E$ satisfies the continuous inclusions $\mathcal{S}(\R^n) \rightarrow E \rightarrow \mathcal{S}'(\R^n)$ and the conditions $(ii)$ and $(iii)$ from Definition \ref{Def-1}. However, in general, $E$ is not a TMIB as the inclusion $\mathcal{S}(\R^n) \rightarrow E$ need not be dense; consider, e.g., $E = L^\infty$. If $E$ is reflexive, then $E$ is in fact a TMIB  \cite[p.\ 827]{D-P-P-V-TMIBUltraDist}. 
\end{remark}

We now give some examples of TMIB and DTMIB; see also \cite[Section 3]{D-P-P-V-TMIBUltraDist} and \cite[Examples 3.3]{D-P-GaborFrameCharGenModSp}.
\begin{examples}\label{examples-TMIB}
$(i)$ A Banach space $E$ is called a \emph{solid TMIB (DTMIB)} on $\R^n$ if $E$ is both a TMIB (DTMIB) and a solid Banach function space (cf.\ the introduction). In such a case, $\| M_\xi e \|_E = \| e \|_E$ for all $e \in E$ and $\xi \in \R^n$.  The Lebesgue spaces $L^p = L^p(\R^n)$, $1 \leq p \leq \infty$, and the mixed norm spaces $L^{p_1}(L^{p_2})$, $1 \leq p_1,p_2 < \infty$, are  of this type.

$(ii)$ Let $E$ be a solid TMIB (DTMIB) on $\R^n$ and let $w$ be a polynomially bounded weight function on $\R^n$. We define the Banach space $E_w = \{ f \in  L^1_{\operatorname{loc}}(\R^n)  \, | \, f w \in E \}$ with  norm $\| f \|_{E_w} := \|  f  w\|_E$. Then, $E_w$ is again a solid TMIB (DTMIB). The weighted Lebesgue spaces $L^p_w = L^p_w(\R^n)$, $1 \leq p \leq \infty$, are of this type. 

$(iii)$ We define $C_{0,w} = C_{0,w}(\R^n)$ as the closed subspace of $L^{\infty}_w$ consisting of all $f\in C(\R^n)$ such that $\lim_{|x| \to \infty} f(x)w(x) = 0$. Then, $C_{0,w}$ is a TMIB.


$(iv)$ Given a TMIB (DTMIB) $E$, we define its associated Fourier space as the Banach space $\mathcal{F} E := \{ f \in  \mathcal{S}'(\R^n) \, | \, \mathcal{F}^{-1} f \in E \}$ with  norm $\| f \|_{\mathcal{F}E} := \| \mathcal{F}^{-1} f \|_E$. Then, $\mathcal{F}E$ is again a TMIB (DTMIB). The Sobolev spaces $\mathcal{F}L^p$, $1 \leq p \leq \infty$, are of this type. 

$(v)$ Let $E_j$ be a TMIB on $\R^{n_j}$ for $j = 1,2$. Let $\tau$ denote either $\pi$ or $\varepsilon$. Then, \cite[Theorem 3.6]{D-P-P-V-TMIBUltraDist} (together with \cite[Lemma 2.3]{F-P-P-ModSpAssocTensProdAmalgamSp} for $\tau = \pi$) yields that $E_1 \widehat{\otimes}_{\tau} E_2$ is a TMIB on $\R^{n_1+n_2}$. 
In particular, the spaces $L^{p_1}(\R^{n_1})  \widehat{\otimes}_{\tau} L^{p_2}(\R^{n_2})$ are TMIB on $\R^{n_1+n_2}$ of this type consisting of locally integrable functions. In \cite[Remark 3.10]{D-P-P-V-TMIBUltraDist} it is shown that $L^{p}(\R^{n})  \widehat{\otimes}_\pi L^{p}(\R^{n})$, $1 < p \leq 2$, is not solid.
\end{examples}

\subsection{Test function and distribution spaces associated to TMIB and DTMIB}\label{subsect:testdist} Let $E$ be a TMIB or DTMIB on $\R^n$. As in the introduction, we set (see \cite[Section 4.1]{D-P-V})
	\[ \D_{E} = \{ f \in \S^{\prime}(\R^{n}) \mid f^{(\alpha)} \in E , ~ \forall \alpha \in \N^{n} \} , \]
	and endow this space with the Fr\'echet space structure generated by the family of norms
	\[ \|f\|_{E,N} := \sup_{|\alpha| \leq N} \|f^{(\alpha)}\|_{E} , \qquad  N \in \N. \]  
\begin{examples}\label{examples-test}
$(i)$  Let $w$ be a polynomially bounded weight function on $\R^n$. The spaces $\D_{L^p_w}$ ($1 \leq p < \infty$), $\mathcal{B}_w = \D_{L^\infty_w}$, and $\dot{\mathcal{B}}_w = \D_{C_{0,w}}$ are weighted variants of the classical Schwartz spaces $\D_{L^p}$,  $\mathcal{B}$, and $\dot{\mathcal{B}}$ \cite{Schwartz} discussed in the introduction. 

$(ii)$  Let $E$ be a solid (D)TMIB. We define $\mathcal{F}E_\infty =   \varprojlim_{m \in \N}\mathcal{F} (E_{\left(1+ |\,\cdot\,|\right)^m})$. Then, $\mathcal{D}_{\mathcal{F}E} = \mathcal{F}E_\infty$ as locally convex spaces.
\end{examples}

Next, we recall from the introduction that
	\[ \D^{\prime}_{E} = \{ f \in \S^{\prime}(\R^{n}) \mid f * \varphi \in E , ~ \forall \varphi \in \S(\R^{n}) \}. \]
 By the closed graph theorem, we see that, for $f \in \D^{\prime}_{E}$ fixed, the linear mapping 
$
 \mathcal{S}(\R^{n}) \rightarrow E, \,  \varphi \mapsto f * \varphi
 $
  is continuous. Hence, we may  endow $\D^{\prime}_{E}$ with the topology induced by the embedding
	\[ \D^{\prime}_{E} \rightarrow \mathcal{L}_{b}(\S(\R^{n}), E), \, f \mapsto (\varphi \mapsto f * \varphi).  \]
	\begin{remark} \label{dual}
		Suppose that $E$ is a TMIB. As already stated in the introduction, we have that $(\D_E)' = \D^\prime_{E^\prime}$ as vector spaces \cite[Theorem 3]{D-P-V}. Moreover, if we endow $(\D_E)'$  with its strong dual topology, the
		inclusion mapping $(\D_E)' \rightarrow \D^\prime_{E^\prime}$ is continuous and the  spaces $(\D_E)'$ and $\D^\prime_{E^\prime}$ have the same bounded sets and null sequences  \cite[Corollary 4 and Corollary 5]{D-P-V}. Later on, as a consequence of our main result, we will show that $(\D_E)' = \D^\prime_{E^\prime}$ as locally convex spaces.
	\end{remark}

\begin{example}\label{examples-dual}
  Let $w$ be a polynomially bounded weight function on $\R^n$. The spaces $\D'_{L^p_w}$ ($1 \leq p < \infty$), $\mathcal{B}'_w = \D'_{L^\infty_w}$, and $\dot{\mathcal{B}}'_w = \D'_{C_{0,w}}$ are weighted variants of the classical Schwartz spaces $\D'_{L^p}$,  $\mathcal{B}'$, and $\dot{\mathcal{B}}'$ \cite{Schwartz} discussed in the introduction. Namely, by Remark \ref{dual}, we have that
\begin{align}
\label{dualexc}
&\D'_{L^1_w} = (\dot{\mathcal{B}}_{1/w})', \qquad \D'_{L^p_w} = (\D_{L^q_{1/w}})' \quad (1 < p < \infty), \\ \nonumber
& \mathcal{B}'_{w} = (\mathcal{D}_{L^1_{1/w}})', \qquad  \dot{\mathcal{B}}'_w = \overline{\mathcal{E}'(\R^n)}^{\mathcal{B}'_w} ,
\end{align}
as vector spaces, where $q$ denotes the conjugate index of $p$ and $\mathcal{E}'(\R^n)$ is the space of compactly supported distributions (see also\ \cite[p.\ 201, Theor\'eme XXV]{Schwartz}). 
\end{example}

Next, we give a description of the space $\D'_{\mathcal{F}E}$ when $E$ is solid. 

\begin{proposition}\label{fourier-solid}
Suppose that $E$ is solid. Define $\mathcal{F}E_{-\infty} = \varinjlim_{m \in \N}\mathcal{F} (E_{\left(1+ |\,\cdot\,|\right)^{-m}})$.  Then, $\mathcal{D}'_{\mathcal{F}E} =\mathcal{F}E_{-\infty}$ as vector spaces. 
\end{proposition} 
\begin{proof}
	We have that 
	$$
	 \mathcal{D}'_{\mathcal{F}E} =  \{ f \in \S^{\prime}(\R^{n}) \mid \mathcal{F}^{-1}f \cdot \varphi \in E , ~ \forall \varphi \in \S(\R^{n}) \}. 
	 $$
	 In particular, it holds that $\mathcal{F}^{-1}f \in  L^1_{\operatorname{loc}}(\R^n)$ for all $f \in \mathcal{D}'_{\mathcal{F}E}$.
	 
	The inclusion  $\mathcal{F}E_{-\infty} \subseteq \mathcal{D}'_{\mathcal{F}E}$  is  clear. We now show the reverse inclusion. 
	For $N \in \N$ we define $\S^{N}(\R^n)$ as the space consisting of all $\varphi \in C^N(\R^n)$ such that $\| \varphi \|_{\S^N} < \infty$.
	We claim that for all $f \in  \mathcal{D}'_{\mathcal{F}E}$ there is $N \in \N$ such that  $\mathcal{F}^{-1}f \cdot \varphi \in E$ for all $\varphi \in \S^{N}(\R^n)$. 
	Since the function $(1 + |x|^2)^{-N/2}$ belongs to $\S^{N}(\R^n)$, this would entail the result. We now show the claim.
	 Let $f \in  \mathcal{D}'_{\mathcal{F}E}$ be arbitrary. By the closed graph theorem, we find that the mapping
 $ \mathcal{S}(\R^{n}) \rightarrow E, \,  \varphi \mapsto \mathcal{F}^{-1}f \cdot \varphi$ is continuous. Hence, there are $M \in \N$ and $C > 0$ such that
	\begin{equation}
		\label{norm-cont}
	\| \mathcal{F}^{-1}f \cdot \varphi \|_{E} \leq C \| \varphi\|_{\S^M}, \qquad \varphi \in \mathcal{S}(\R^n).
	\end{equation}
Note that $\S(\R^n)$ is dense in $\S^{M+1}(\R^n)$ with respect to the norm $\| \, \cdot \|_M$. Let $\varphi \in \S^{M+1}(\R^n)$ be arbitrary. Choose a sequence $(\varphi_j)_{j \in \N} \subset \S(\R^n)$ such that $\| \varphi -\varphi_j\|_{\S^M} \to 0$ as $j \to \infty$. Then, $\mathcal{F}^{-1}f \cdot \varphi_j \to \mathcal{F}^{-1}f \cdot \varphi$ in $L^1_{\operatorname{loc}}(\R^n)$ as $j \to \infty$. The inequality \eqref{norm-cont} implies that $(\mathcal{F}^{-1}f \cdot \varphi_j)_{j \in \N}$ is a Cauchy sequence in  $E$. As $E$ is complete, we obtain that $ \mathcal{F}^{-1}f \cdot \varphi \in E$.
\end{proof}
\begin{remark}
	If $E$ is a DTMIB, Proposition \ref{fourier-solid} is a consequence of  \cite[Theorem 3]{D-P-V}.
\end{remark}

\subsection{Discrete spaces associated to TMIB and DTMIB}  
Let $E$ be a TMIB or DTMIB on $\R^n$ and fix a  lattice $\Lambda$ in $\R^n$ and a bounded open neighborhood $U$ of zero such that the family of sets $\{ \lambda + U \mid \lambda \in \Lambda \}$ is pairwise disjoint. We will use the following  notion introduced in  \cite[Definition 5.1]{D-P-GaborFrameCharGenModSp} (with $B  = 0$).
\begin{definition}\label{def-discrete}
		Let $\chi \in \D(U) \setminus \{0\}$. We define the \emph{discrete space associated to $E$ on $\Lambda$} as
			\[ E_{d}(\Lambda) = E_{d, \chi}(\Lambda) := \left\{ c = (c_{\lambda})_{\lambda \in \Lambda} \in \C^{\Lambda} \mid S_{\chi}(c) := \sum_{\lambda \in \Lambda} c_{\lambda} T_{\lambda} \chi \in E \right\} \]
		and endow it with the norm $\|c\|_{E_{d}(\Lambda)} = \|c\|_{E_{d, \chi}(\Lambda)} := \|S_{\chi}(c)\|_{E}$. For $\Lambda = \Z^{n}$ we simply write $E_{d} = E_{d}(\Z^{n})$.
\end{definition}		
The space $E_{d}(\Lambda)$ is a Banach space whose definition is independent of the window $\chi \in \D(U) \setminus \{0\}$ and different non-zero elements of $\D(U)$ induce equivalent norms \cite[Theorem 5.2]{D-P-GaborFrameCharGenModSp}. In particular, $E_{d}(\Lambda)$ is independent of the chosen neighborhood $U$. By \cite[Proposition 5.5]{D-P-GaborFrameCharGenModSp}, the following continuous inclusions hold
	\begin{equation}
		\label{eq:ContIncEd}
		s(\Lambda) \rightarrow E_{d}(\Lambda) \rightarrow s^{\prime}(\Lambda) .
	\end{equation}
We now give some examples of discrete spaces associated to TMIB and DTMIB; see also \cite[Section 5]{D-P-GaborFrameCharGenModSp}.
	
\begin{examples}\label{examples-seq}
$(i)$ Suppose that $E$ is solid. Then, it holds that
$$
E_d(\Lambda) = \left\{  c  \in \C^\Lambda  \, \Big| \,\sum_{\lambda \in \Lambda} c_\lambda  1_{\lambda+U} \in E \right\},
$$
where $1_{A}$ denotes the characteristic function of a set $A \subseteq \R^n$, and the topology of $E_d(\Lambda)$ is induced by the norm 
$
\| c \| =  \| \sum_{\lambda \in \Lambda} |c_\lambda|  1_{\lambda+U} \|_E.
$
Hence, in the solid case, our definition coincides with the standard one used in the theory of atomic decompositions of coorbit spaces  \cite[Definition 3.4]{F-G-BSpIntegrGroupRepAtomicDecomp}. For a polynomially bounded weight function $w$ on $\R^n$ it holds that $(L^p_w)_d(\Lambda) = \ell^p_w(\Lambda)$, $1 \leq p \leq \infty$, and $(C_{0,w})_d(\Lambda)=c_{0,w}(\Lambda)$. Let $\Lambda_1, \Lambda_2$ be two lattices in $\R^n$. Then, $(L^{p_1}(L^{p_2}))_d(\Lambda_1 \times \Lambda_2) = \ell^{p_1}(\Lambda_1;\ell^{p_2}(\Lambda_2))$,  $1 \leq p_1,p_2 < \infty$.


$(ii)$ Let $E$ be solid.  We wish to determine  $(\mathcal{F}E)_d(\Lambda)$. 
We define the dual lattice of $\Lambda = A_\Lambda \Z^{n}$ as $\Lambda^\perp = (A_\Lambda^t)^{-1} \Z^n =  \{ \mu \in \R^n \, | \, \lambda \cdot \mu \in \Z, \, \forall \lambda \in \Lambda \}$ and set $\operatorname*{vol}(\Lambda) = | A_\Lambda[0,1)^n |$.
We write $E(\R^n/ \Lambda^\perp)$ for the Banach space consisting of all $\Lambda^\perp$-periodic elements $f \in  L^1_{\operatorname{loc}}(\R^n)$   such that $\| f \|_{E(\R^n/ \Lambda^\perp)} := \| f  1_{I_{\Lambda^\perp}}\|_E < \infty$. Note that $E(\R^n/ \Lambda^\perp) \subset L^1(\R^n/ \Lambda^\perp)$. As customary, we define the Fourier coefficients of an element $f \in  L^1(\R^n/ \Lambda^\perp)$ as
$$
c_\lambda(f) =  \frac{1}{\operatorname*{vol}(\Lambda^\perp)} \int_{I_{\Lambda^\perp}} f(x) e^{-2\pi i \lambda \cdot x} dx, \qquad \lambda \in \Lambda.
$$
In \cite[Proposition 5.14]{D-P-GaborFrameCharGenModSp} it is shown that the mapping
$$
E(\R^n/ \Lambda^\perp) \rightarrow (\mathcal{F}E)_d(\Lambda), \, f \mapsto (c_\lambda(f))_{\lambda \in \Lambda}
$$
is a topological isomorphism.

$(iii)$ Let $E_j$ be a TMIB on $\R^{n_j}$ and let $\Lambda_j$ be a lattice in $\R^{n_j}$ for $j = 1,2$. Let $\tau$ denote either $\pi$ or $\varepsilon$. By \cite[Proposition 5.25]{D-P-GaborFrameCharGenModSp}, we have the canonical isomorphism
$(E_1 \widehat{\otimes}_{\tau} E_2)_d(\Lambda_1 \times \Lambda_2) \cong (E_1)_d(\Lambda_1) \widehat{\otimes}_{\tau} (E_2)_d(\Lambda_2)$.
\end{examples}	
Finally, we state two results from  \cite[Section 5]{D-P-GaborFrameCharGenModSp} that will play a crucial role in this article. Note that for all $c \in s^{\prime}(\Lambda)$ and $\varphi \in \S(\R^n)$ it holds that
$$
S_{\varphi}(c) := \sum_{\lambda \in \Lambda} c_{\lambda} T_{\lambda} \varphi \in \S'(\R^n)
$$
and that the series is absolutely summable in  $\S'(\R^n)$. We then have:
\begin{proposition}\label{S-result} \cite[Proposition 5.7]{D-P-GaborFrameCharGenModSp} 
The bilinear mapping
\[ E_{d}(\Lambda) \times \S(\R^{n}) \rightarrow E, \, (c, \varphi) \mapsto S_{\varphi}(c) \]
is well-defined and continuous.
\end{proposition}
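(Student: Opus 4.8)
The plan is to reduce everything to the single bilinear norm estimate
\[
\|S_\varphi(c)\|_E \le C\, \|\varphi\|_{S^N}\, \|c\|_{E_d(\Lambda)}, \qquad c\in E_d(\Lambda),\ \varphi\in\S(\R^n),
\]
for suitable $C>0$ and $N\in\N$. Granting it, well-definedness and continuity follow at once: by the remark preceding the statement together with the inclusion $E_d(\Lambda)\to s'(\Lambda)$ from \eqref{eq:ContIncEd}, the series $S_\varphi(c)$ is a priori an absolutely convergent element of $\S'(\R^n)$, and the estimate (applied to the grouped, $E$-absolutely convergent reorganization produced below) places it in $E$ and renders $(c,\varphi)\mapsto S_\varphi(c)$ continuous, since $E\hookrightarrow\S'(\R^n)$ forces the two limits to coincide. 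Note that only the polynomial bound on $\|T_x\|_{\mathcal{L}(E)}$ in \eqref{twf} and the Banach structure $\S(\R^n)\hookrightarrow E\hookrightarrow\S'(\R^n)$ are used, so the argument is identical for TMIB and DTMIB; the modulation bound is not needed here.

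First I would record the \emph{lattice-shift estimate}. For $\mu\in\Lambda$ set $\sigma_\mu c=(c_{\lambda-\mu})_{\lambda\in\Lambda}$; a direct reindexing gives $S_\chi(\sigma_\mu c)=T_\mu S_\chi(c)$, whence by \eqref{twf}
\[
\|\sigma_\mu c\|_{E_d(\Lambda)}=\|T_\mu S_\chi(c)\|_E\le C_0(1+|\mu|)^{\tau_0}\|c\|_{E_d(\Lambda)}.
\]
Next I would fix a lattice partition of unity $1=\sum_{\mu\in\Lambda}T_\mu\theta$ with $\theta\in\D(\R^n)$ and $K:=\supp\theta$ compact, and decompose $\varphi=\sum_{\mu}(T_\mu\theta)\varphi$. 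Writing $\psi_\mu:=T_{-\mu}\big((T_\mu\theta)\varphi\big)=\theta\cdot T_{-\mu}\varphi\in\D(K)$ and reindexing exactly as above yields the clean identity
\[
S_\varphi(c)=\sum_{\mu\in\Lambda}S_{(T_\mu\theta)\varphi}(c)=\sum_{\mu\in\Lambda}S_{\psi_\mu}(\sigma_\mu c).
\]
Since $\varphi$ is rapidly decreasing while $\theta$ sits on the fixed compact $K$, one has $\|\psi_\mu\|_{C^N}\lesssim\|\varphi\|_{S^{M}}(1+|\mu|)^{-L}$ for every $L$, with $M$ depending on $N$ and $L$.

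The crux is the \emph{change-of-window estimate}: for every compact $K$ there are $C_K>0$ and $N\in\N$ with $\|S_\psi(d)\|_E\le C_K\|\psi\|_{C^N}\|d\|_{E_d(\Lambda)}$ for all $\psi\in\D(K)$ and $d\in E_d(\Lambda)$. Granting this and combining it with the two displays above, choosing $L>\tau_0+n$, gives
\[
\sum_{\mu\in\Lambda}\|S_{\psi_\mu}(\sigma_\mu c)\|_E\le C_KC_0\,\|\varphi\|_{S^{M}}\|c\|_{E_d(\Lambda)}\sum_{\mu\in\Lambda}(1+|\mu|)^{\tau_0-L}<\infty,
\]
which is simultaneously the desired bilinear estimate and the $E$-absolute convergence needed for well-definedness.

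The whole difficulty is concentrated in the change-of-window estimate, and this is genuinely the hard part. The reference window $\chi$ is supported in $U$, whose $\Lambda$-translates are pairwise disjoint and therefore do \emph{not} cover $\R^n$; consequently a function $\psi$ supported on a set of the size of a fundamental domain cannot be written as a finite superposition of $\Lambda$-translates of functions supported in $U$ (the cell boundaries obstruct any such splitting). For this reason the estimate does not reduce formally to the window independence recorded in \cite[Theorem 5.2]{D-P-GaborFrameCharGenModSp}. Instead I would establish it by the very mechanism underlying that result: decompose $\psi\in\D(K)$ among the boundedly many lattice cells that meet $K$, and absorb the resulting finitely many lattice shifts through the lattice-shift estimate and the polynomial translation bound \eqref{twf}, so that each piece is ultimately compared with $S_\chi$.
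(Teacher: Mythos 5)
Your reduction is correct as far as it goes: the lattice-shift estimate $S_\chi(\sigma_\mu c)=T_\mu S_\chi(c)$, the identity $S_\varphi(c)=\sum_{\mu\in\Lambda}S_{\psi_\mu}(\sigma_\mu c)$ with $\psi_\mu=\theta\cdot T_{-\mu}\varphi\in\D(K)$ rapidly decreasing in $\mu$, and the regrouping argument through $E\hookrightarrow\S'(\R^n)$ are all sound. The genuine gap sits exactly where you locate the difficulty: the change-of-window estimate $\|S_\psi(d)\|_E\le C_K\|\psi\|_{C^N}\|d\|_{E_d(\Lambda)}$ for $\psi\in\D(K)$ is never proved, and the method you offer for it is precisely the one you rule out two sentences earlier. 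Since $\bigcup_{\mu\in\Lambda}(\mu+U)$ does not cover $\R^n$, a function $\psi\in\D(K)$ whose support meets the complement of this union cannot be split, smoothly or otherwise, into finitely many $\Lambda$-translates of elements of $\D(U)$; ``decomposing $\psi$ among the lattice cells that meet $K$ and absorbing the shifts'' is therefore not an argument but a restatement of the obstruction. Note moreover that the change-of-window estimate is just the restriction of the proposition's own bilinear estimate to windows in $\D(K)$ (and, by your reduction, implies the general case), so you have reduced the proposition to an equivalent statement and left that statement unproved.

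Your aside that ``the modulation bound is not needed here'' is the symptom of the missing idea. This paper does not reprove the proposition --- it imports \cite[Proposition 5.7]{D-P-GaborFrameCharGenModSp} --- and the mechanism behind the cited result crosses the gaps between the cells on the Fourier side, using the modulation bound in \eqref{twf} essentially. Because the $\Lambda$-translates of $\chi$ have pairwise disjoint supports, $(\chi,T_\lambda\chi)_{L^2}=0$ for $\lambda\neq0$, hence the $\Lambda^\perp$-periodization $\sum_{\mu\in\Lambda^\perp}|\widehat{\chi}(\xi-\mu)|^2$ is a positive constant $1/c$. Consequently every $\psi\in\S(\R^n)$ decomposes as $\psi=\sum_{\mu\in\Lambda^\perp}K_\mu\ast M_\mu\chi$, where $K_\mu=c\,\mathcal{F}^{-1}\bigl(\widehat{\psi}\,\overline{T_\mu\widehat{\chi}}\,\bigr)$ satisfies $\|K_\mu\|_{S^{N}}\le C_{N,L}\|\psi\|_{S^{M}}(1+|\mu|)^{-L}$ for every $L$. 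Since $\mu\cdot\lambda\in\Z$ for $\mu\in\Lambda^\perp$ and $\lambda\in\Lambda$, one has $T_\lambda M_\mu\chi=M_\mu T_\lambda\chi$, so $S_{M_\mu\chi}(d)=M_\mu S_\chi(d)$ and therefore
\[
S_\psi(d)=\sum_{\mu\in\Lambda^\perp}K_\mu\ast M_\mu S_\chi(d),\qquad
\|S_\psi(d)\|_E\le C\sum_{\mu\in\Lambda^\perp}\|K_\mu\|_{S^{N_0}}(1+|\mu|)^{\tau_1}\|d\|_{E_d(\Lambda)},
\]
where the convolution estimate $\|e\ast\phi\|_E\le C\|e\|_E\|\phi\|_{S^{N_0}}$ uses only the translation bound, but the factor $(1+|\mu|)^{\tau_1}=\;$growth of $\|M_\mu\|_{\mathcal{L}(E)}$ is what the series must beat, and it is beaten only because modulations act with polynomially bounded norms. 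Some device of this Poisson-summation/Walnut type (or the STFT machinery of \cite{D-P-GaborFrameCharGenModSp}) is what your sketch lacks; without it, the proof does not close.
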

\begin{remark}
We emphasize that Proposition \ref{S-result} states nothing about how  the series $S_{\varphi}(c) = \sum_{\lambda \in \Lambda} c_{\lambda} T_{\lambda} \varphi$ converges in $E$. We refer to \cite[Section 5.2]{D-P-GaborFrameCharGenModSp} for a discussion of this matter. 
\end{remark}
\begin{proposition}\label{eq:SpsiIntoE} \cite[Proposition 5.10]{D-P-GaborFrameCharGenModSp} 
The bilinear mapping
\[ E \times \S(\R^{n}) \rightarrow E_{d}(\Lambda), \, (e, \varphi) \mapsto R_{\varphi}(e) := (e * \varphi(\lambda))_{\lambda \in \Lambda}  \]
is well-defined and continuous.
\end{proposition}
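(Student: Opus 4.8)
\emph{Plan.} Write $c = R_\varphi(e)$ and $G = S_\chi(c) = \sum_{\lambda\in\Lambda} (e*\varphi)(\lambda)\, T_\lambda\chi$. By definition $\|c\|_{E_d(\Lambda)} = \|G\|_E$, so the whole statement reduces to showing that $G\in E$ together with an estimate $\|G\|_E \le C\,\|e\|_E\, q(\varphi)$ for some constant $C$ and some continuous seminorm $q$ on $\S(\R^n)$; such a bound yields both well-definedness and joint continuity of the bilinear map. First I would record that $c\in s'(\Lambda)$: since $e\in\S'(\R^n)$ and $(e*\varphi)(\lambda) = \ev{e}{T_\lambda\check\varphi}$, the seminorm estimate for $e$ together with $\|T_\lambda\check\varphi\|_{S^N}\le C(1+|\lambda|)^N$ forces $|c_\lambda|\le C'(1+|\lambda|)^N$; in particular $G$ is a well-defined element of $\S'(\R^n)$ and the series defining it converges there. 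The difficulty is that, without solidity, one cannot dominate $|G|$ pointwise by a multiple of $\sum_\lambda |(e*\varphi)(\lambda)|\,1_{\lambda+U}$ and compare with $e$; a different mechanism is needed.

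The key idea is to turn the \emph{sampling} into a convolution and then apply the Poisson summation formula. Using $\delta_\lambda * \chi = T_\lambda\chi$ and that $e*\varphi\in\OM(\R^n)$, I would rewrite $G$ in $\S'(\R^n)$ as $G = \big[(e*\varphi)\sum_{\lambda\in\Lambda}\delta_\lambda\big] * \chi$, and then replace the Dirac comb by its Poisson transform $\sum_{\lambda\in\Lambda}\delta_\lambda = \operatorname{vol}(\Lambda)^{-1}\sum_{\mu\in\Lambda^\perp} M_\mu$. Since multiplication by the $\OM$-function $e*\varphi$ and convolution by the fixed $\chi\in\S(\R^n)$ are continuous on $\S'(\R^n)$, the series is carried through termwise. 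Invoking the elementary identity $[M_\mu f]*\chi = M_\mu\big(f * M_{-\mu}\chi\big)$ with $f = e*\varphi$ and associativity of convolution, this produces the representation
\[ G = \frac{1}{\operatorname{vol}(\Lambda)}\sum_{\mu\in\Lambda^\perp} M_\mu\big(e * \rho_\mu\big), \qquad \rho_\mu := \varphi * M_{-\mu}\chi \in \S(\R^n), \]
valid a priori in $\S'(\R^n)$.

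It then remains to estimate this series in $E$, for which I would use two ingredients. First, the modulation bound \eqref{twf}, $\|M_\mu\|_{\mathcal L(E)}\le C_1(1+|\mu|)^{\tau_1}$. Second, a convolution estimate $\|e*\rho\|_E\le C_0\|e\|_E\int_{\R^n}|\rho(s)|(1+|s|)^{\tau_0}\,ds$ for $\rho\in\S(\R^n)$, obtained by writing $e*\rho = \int_{\R^n}\rho(s)\,T_s e\,ds$ as an $E$-valued integral and applying the translation bound \eqref{twf}; in the DTMIB case, where translation need not be strongly continuous, the integral is read in the Gelfand--Pettis sense on $E = F'$ and the same bound results. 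Together these give
\[ \|G\|_E \le \frac{C_0 C_1}{\operatorname{vol}(\Lambda)}\,\|e\|_E\sum_{\mu\in\Lambda^\perp}(1+|\mu|)^{\tau_1}\int_{\R^n}|\rho_\mu(s)|(1+|s|)^{\tau_0}\,ds . \]

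The heart of the argument is to show that the last sum is finite and controlled by a continuous seminorm of $\varphi$. Here I would exploit the modulation $M_{-\mu}$ hidden in $\rho_\mu$: integrating by parts $k$ times in $\rho_\mu(y) = \int \varphi(y-z)\,e^{-2\pi i \mu\cdot z}\chi(z)\,dz$ produces a factor $|\mu|^{-k}$ (for $\mu\ne 0$, and nonzero lattice points satisfy $|\mu|\ge c>0$) at the cost of $k$ derivatives distributed over $\varphi$ and $\chi$ by Leibniz. Since $\varphi,\chi\in\S(\R^n)$, this yields, for every $k$, a bound $\int|\rho_\mu(s)|(1+|s|)^{\tau_0}\,ds\le C_k(1+|\mu|)^{-k}q_k(\varphi)$ with $q_k$ a continuous seminorm on $\S(\R^n)$ and $C_k$ depending on the fixed $\chi$. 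Choosing $k>\tau_1+n$ makes $\sum_{\mu\in\Lambda^\perp}(1+|\mu|)^{\tau_1-k}$ convergent, so the series representing $G$ converges \emph{absolutely in $E$}; as $E\hookrightarrow\S'(\R^n)$, its $E$-sum coincides with its $\S'(\R^n)$-sum $G$, whence $G\in E$ with $\|G\|_E\le C\|e\|_E q(\varphi)$, the desired bound. I expect this last step --- extracting summable decay in $\mu$ with constants governed by Schwartz seminorms of $\varphi$ --- to be the main technical obstacle, while the Poisson-summation reformulation is the conceptual device that circumvents the missing solidity.
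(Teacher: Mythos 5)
A preliminary remark on the comparison itself: the paper you were given does not prove this proposition at all --- it is imported verbatim, with citation, from \cite[Proposition 5.10]{D-P-GaborFrameCharGenModSp} --- so there is no in-paper proof to measure your argument against, and it must be judged on its own merits (and against the toolkit of the cited works).

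On those merits, your argument is correct, and its engine is the right one for the non-solid setting. The chain: $G=S_\chi(R_\varphi(e))=[(e*\varphi)\sum_{\lambda\in\Lambda}\delta_\lambda]*\chi$, Poisson summation $\sum_{\lambda\in\Lambda}\delta_\lambda=\operatorname{vol}(\Lambda)^{-1}\sum_{\mu\in\Lambda^\perp}M_\mu$, and the commutation $[M_\mu f]*\chi=M_\mu(f*M_{-\mu}\chi)$ lead to the Walnut-type expansion $G=\operatorname{vol}(\Lambda)^{-1}\sum_{\mu\in\Lambda^\perp}M_\mu\bigl(e*(\varphi*M_{-\mu}\chi)\bigr)$, which reduces everything to exactly the two structural properties a (D)TMIB is guaranteed to have: polynomially bounded modulations \eqref{twf} and the Banach convolution module estimate $\|e*\rho\|_E\leq C_0\|e\|_E\int|\rho(s)|(1+|s|)^{\tau_0}\,ds$, with summability in $\mu$ extracted from smoothness by integration by parts. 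This periodization-plus-Fourier-series mechanism is the same circle of ideas used throughout \cite{D-P-GaborFrameCharGenModSp} (and mirrors how, in the present paper's proof of Proposition \ref{p:GaborD'E}, modulations are peeled off so that Propositions \ref{S-result} and \ref{eq:SpsiIntoE} can absorb the translation structure), so your route is not a detour but the expected one.

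Three points deserve tightening, none of which is a gap in the idea. First, the module estimate: for a TMIB the Bochner-integral argument needs strong continuity of $x\mapsto T_xe$, which follows from density of $\S(\R^n)$ in $E$; for a DTMIB $E=F'$ your weak-$*$ integral must additionally be shown to coincide with the distributional convolution $e*\rho$, which one checks by testing against $\phi\in\S(\R^n)\subseteq F$ and using that $\int\rho(s)T_{-s}\phi\,ds$ converges in $\S(\R^n)$; alternatively, this module property for both classes is available in \cite{D-P-P-V-TMIBUltraDist} and can simply be cited. Second, the term $\mu=0$ carries no decay factor and must be bounded directly, while for $\mu\neq0$ the conversion of $|\mu|^{-k}$ into $(1+|\mu|)^{-k}$ uses $\inf\{|\mu| : \mu\in\Lambda^\perp\setminus\{0\}\}>0$, as you note. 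Third, the logical order matters: one first has convergence of the expansion in $\S'(\R^n)$ to $G$, then absolute convergence in $E$ of the same series, and only the continuous inclusion $E\hookrightarrow\S'(\R^n)$ identifies the two sums, giving $G\in E$ together with $\|R_\varphi(e)\|_{E_d(\Lambda)}\leq C\|e\|_E\,q(\varphi)$; since this bound is a product of the norm of $e$ and a continuous seminorm of $\varphi$, joint continuity of the bilinear map follows.
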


\section{Statement of the main result and some consequences}\label{sect-stat}
We fix throughout this section a TMIB or DTMIB $E$ on $\R^{n}$. The main result of this article is now  as follows.
\begin{theorem}\label{main}
The following topological isomorphisms hold
\[ \D_{E} \cong E_{d} \widehat{\otimes} s \quad \text{and} \quad \D^{\prime}_{E} \cong E_{d} \widehat{\otimes} s^{\prime} . \]
\end{theorem}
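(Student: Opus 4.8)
The plan is to route both isomorphisms through a single device: a Gabor frame simultaneously discretizes $\D_E$ and $\D_E'$, realizing each as a complemented subspace of an explicit sequence space, and then the Pe\l czy\'nski decomposition method (in the version of Vogt \cite{V-SeqSpRepSpTestFuncDist}) upgrades the complemented embeddings to isomorphisms. Concretely, fix windows $g,\gamma\in\S(\R^n)$ and lattice parameters $\alpha,\beta>0$ for which $\{M_{\beta l}T_{\alpha k}g\}_{(k,l)\in\Z^n\times\Z^n}$ is a Gabor frame of $L^2(\R^n)$ with dual window $\gamma\in\S(\R^n)$ \cite{G-FoundationsTimeFreqAnal}, and consider the analysis and synthesis operators
\[ C_g f = (\langle f, M_{\beta l}T_{\alpha k}g\rangle)_{(k,l)}, \qquad D_\gamma(c) = \sum_{(k,l)} c_{k,l}\, M_{\beta l}T_{\alpha k}\gamma, \]
which satisfy $D_\gamma C_g = \id$. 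The heart of the argument is the claim that these operators restrict to continuous maps between $\D_E$ (resp.\ $\D_E'$) and $s(\beta\Z^n; E_d(\alpha\Z^n))$ (resp.\ $s'(\beta\Z^n; E_d(\alpha\Z^n))$). Granting this, $P:=C_gD_\gamma$ is a continuous projection on the sequence space whose range is isomorphic to $\D_E$ (resp.\ $\D_E'$), so the test and distribution spaces are complemented in these sequence spaces.

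First I would establish the Gabor frame characterization by decoupling the two lattice directions. For a fixed frequency index $l$ one has $\sum_k c_{k,l} M_{\beta l}T_{\alpha k}\gamma = M_{\beta l} S_\gamma((c_{k,l})_k)$, so by Definition \ref{def-discrete} and the boundedness of $M_{\beta l}$ the space variable is governed exactly by the $E_d(\alpha\Z^n)$-norm of the slice $(c_{k,l})_k$; Propositions \ref{S-result} and \ref{eq:SpsiIntoE} furnish the required continuity of synthesis and of sampling in this variable. The frequency direction then contributes the $s$- or $s'$-factor: since differentiating $D_\gamma(c)$ brings down powers of $\beta l$ from the modulations, membership $D_\gamma(c)\in\D_E$ (all derivatives in $E$) matches rapid decay of $\|(c_{k,l})_k\|_{E_d}$ in $l$, that is, membership in $s(\beta\Z^n;E_d)$, whereas $f\in\D_E'$ corresponds through $f*\varphi\in E$ to at most polynomial growth in $l$, that is, membership in $s'(\beta\Z^n;E_d)$. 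The matching continuity of $C_g$ follows from the mapping properties of the short-time Fourier transform against the Schwartz window $g$ together with the polynomial operator bounds \eqref{twf}.

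Next I would identify the sequence spaces with the asserted tensor products and run the decomposition. By \eqref{tensor-1} and \eqref{tensor-2} applied with $X=E_d$ (and the lattice-independence of $s$ and $s'$ up to isomorphism),
\[ s(\beta\Z^n;E_d)\cong s\widehat{\otimes}E_d = E_d\widehat{\otimes}s, \qquad s'(\beta\Z^n;E_d)\cong s'\widehat{\otimes}E_d = E_d\widehat{\otimes}s'. \]
For the reverse complementation I would use a \emph{companion} Gabor system that is a Riesz sequence rather than a frame, i.e.\ time-frequency shifts $\{M_{\beta'l}T_{\alpha'k}\psi\}$ on a coarse lattice ($\alpha'\beta'>1$) with $\psi\in\S(\R^n)$: its synthesis operator embeds the sequence space into $\D_E$ (resp.\ $\D_E'$) and, being biorthogonal to the dual Riesz sequence, admits the analysis operator as a continuous left inverse, so that $E_d\widehat{\otimes}s$ (resp.\ $E_d\widehat{\otimes}s'$) is complemented in $\D_E$ (resp.\ $\D_E'$). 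The decomposition is then powered by the self-absorption $s\widehat{\otimes}s\cong s$ (and $s'\widehat{\otimes}s'\cong s'$), which gives $E_d\widehat{\otimes}s\cong s(\N;E_d\widehat{\otimes}s)$, so that this space contains countably many weighted copies of itself; feeding the two complemented embeddings into the Pe\l czy\'nski--Vogt swindle yields $\D_E\cong E_d\widehat{\otimes}s$, and verbatim $\D_E'\cong E_d\widehat{\otimes}s'$.

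I expect the principal obstacle to be the Gabor frame characterization itself, the technical core flagged in the introduction. Without solidity of $E$ the synthesis series cannot be estimated coefficientwise, so the extraction of the $E_d$-factor in the space variable must proceed entirely through the abstract continuity statements of Propositions \ref{S-result} and \ref{eq:SpsiIntoE} and the polynomial bounds \eqref{twf}, while the decay or growth in the frequency variable must be controlled uniformly across the slices. A secondary difficulty is that $\D_E'$ is not metrizable, so the decomposition argument has to be phrased to accommodate the $(LB)$-type structure carried by the $s'$-factor.
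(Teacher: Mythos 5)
Your proposal is correct and follows essentially the same route as the paper: continuity of the Gabor analysis and synthesis operators between $\D_{E}$, $\D^{\prime}_{E}$ and the vector-valued spaces $s(\Lambda_{1};E_{d}(\Lambda_{0}))$, $s^{\prime}(\Lambda_{1};E_{d}(\Lambda_{0}))$ (the paper's Propositions \ref{p:GaborDE} and \ref{p:GaborD'E}), the identifications \eqref{tensor-1} and \eqref{tensor-2}, and then the Pe\l czy\'nski--Vogt decomposition. The only cosmetic difference is that where you posit a separate companion Riesz sequence for the reverse complementation, the paper extracts it from the very same Gaussian/canonical-dual window pair via the Wexler--Raz biorthogonality relations on the adjoint lattice $\Z^{n}\times\tfrac{1}{b}\Z^{n}$, which also settles (what you leave implicit) that the biorthogonal dual system is generated by a Schwartz window.
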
		
The proof of Theorem \ref{main} will be given at the end of the next section. Here,  we will  use Theorem \ref{main} to obtain some linear topological properties of the spaces $\D_{E}$  and $\D'_E$.

\begin{proposition}\label{top} \mbox{}
			\begin{itemize}
				\item[$(i)$]  The space $\D_{E}$ is a quasinormable Fr\'{e}chet space.
				\item[$(ii)$] The space $\D'_{E}$ is a complete $(LB)$-space and, thus, ultrabornological.
				\item[$(iii)$] Both the spaces $\D_{E}$ and $\D'_{E}$ are not Montel.
			\end{itemize}
\end{proposition}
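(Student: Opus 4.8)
The plan is to transport all three statements through Theorem~\ref{main} to explicit sequence-space models and argue there. Combining Theorem~\ref{main} with the canonical isomorphisms \eqref{tensor-1} and \eqref{tensor-2} for the Banach space $X = E_d$, and using $s \cong s(\Z^n)$, $s' \cong s'(\Z^n)$, I would first record the topological identifications
\[ \D_E \cong s(\Z^n; E_d) = \varprojlim_{N \in \N} \ell^\infty_{(1+|\,\cdot\,|)^N}(\Z^n; E_d), \qquad \D'_E \cong s'(\Z^n; E_d) = \varinjlim_{N \in \N} \ell^\infty_{(1+|\,\cdot\,|)^{-N}}(\Z^n; E_d). \]
Here $E_d$ is an infinite-dimensional Banach space, since by \eqref{eq:ContIncEd} it contains the continuous injective image of $s(\Z^n)$. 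The three assertions now become statements about these weighted, Banach-valued K\"othe-type spaces; the essential feature is that nuclearity or Schwartz arguments are unavailable, the infinite-dimensional factor $E_d$ being precisely what will force non-Montelness.

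For $(i)$ I would verify quasinormability of $s(\Z^n; E_d)$ directly from the standard criterion. Writing $p_N(c) = \sup_{\lambda} \|c_\lambda\|_{E_d}(1+|\lambda|)^N$ for the fundamental seminorms and $U_N$ for the associated unit balls, given $N$ I would take $M = N+1$, and for $\varepsilon > 0$ choose $R$ with $(1+R)^{-1} \le \varepsilon$. Splitting $c \in U_M$ as $c = b + a$, with $b$ the restriction of $c$ to $\{|\lambda| \le R\}$ and $a$ its tail, the estimate $p_N(a) \le \sup_{|\lambda| > R}(1+|\lambda|)^{N-M} = (1+R)^{-1} \le \varepsilon$ gives $a \in \varepsilon U_N$, while the finitely supported remainders $b$ all lie in a fixed bounded set $B$. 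Thus $U_M \subseteq B + \varepsilon U_N$, which is exactly quasinormability.

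For $(ii)$ the space $s'(\Z^n; E_d)$ is by construction a countable inductive limit of Banach spaces, hence ultrabornological; the content lies in completeness. I would establish this through Retakh's condition~(M): with $B_N$ the unit ball of the step $\ell^\infty_{(1+|\,\cdot\,|)^{-N}}(\Z^n; E_d)$, for each $N$ and every $M > N$ the norms of all further steps $\ell^\infty_{(1+|\,\cdot\,|)^{-M'}}(\Z^n; E_d)$, $M' \ge M$, induce the same topology on $B_N$. This follows from the tail estimate: on $B_N$ one has $\|c_\lambda\|_{E_d} \le (1+|\lambda|)^N$, so the relevant weighted tails $(1+|\lambda|)^{N-M'}$ are uniformly small as soon as $M' > N$. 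Condition~(M) then yields that the inductive limit is complete (indeed boundedly retractive).

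For $(iii)$ I would exhibit in each space a closed subspace topologically isomorphic to the infinite-dimensional Banach space $E_d$. Fixing $\lambda_0$, the single-site map $b \mapsto (b\, \delta_{\lambda,\lambda_0})_{\lambda}$ embeds $E_d$ into $s(\Z^n; E_d)$, respectively $s'(\Z^n; E_d)$; it is a topological isomorphism onto its range, since every seminorm reduces there to a constant multiple of $\|\cdot\|_{E_d}$, and the range is closed, being the common kernel of the remaining coordinate maps. As the semi-Montel property passes to closed subspaces while an infinite-dimensional Banach space is never semi-Montel by Riesz's lemma, neither $\D_E$ nor $\D'_E$ can be Montel. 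I expect the main obstacle to be part $(ii)$: carefully checking condition~(M) for the completeness of the inductive limit, precisely because the absence of nuclearity prevents the usual shortcut of invoking a $(DF)$- or Schwartz-space structure.
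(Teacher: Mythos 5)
Your proposal is correct, and it reaches the three statements by the same initial reduction as the paper (Theorem~\ref{main} together with \eqref{tensor-1}--\eqref{tensor-2}, identifying $\D_E \cong s(\Z^n;E_d)$ and $\D'_E \cong s'(\Z^n;E_d)$), but from there it diverges: the paper disposes of $(i)$ and $(iii)$ in one stroke by citing Grothendieck \cite[Chapitre II, p.~76, Proposition 13]{G-ProdTensTopEspNucl} for the properties of $E_d \widehat{\otimes} s$ with $E_d$ an infinite-dimensional Banach space, and gets $(ii)$ immediately from \eqref{tensor-2}, since $E_d \widehat{\otimes} s'$ is complete by construction (a completed tensor product) and \eqref{tensor-2} exhibits it as an $(LB)$-space. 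You instead verify everything by hand: the head/tail splitting for the quasinormability criterion in $(i)$ is correct (the truncations do form a bounded set, and the tail estimate $(1+R)^{N-M}\le\varepsilon$ works with $M=N+1$); the single-site copy of $E_d$ plus Riesz's lemma in $(iii)$ is a clean replacement for Grothendieck's proposition, and the needed infinite-dimensionality of $E_d$ indeed follows from \eqref{eq:ContIncEd}; and your condition~(M) argument in $(ii)$ is the standard one for these weights, though it silently relies on the nontrivial chain ``condition (M) $\Rightarrow$ boundedly retractive $\Rightarrow$ complete'' for $(LB)$-spaces (Retakh/Neus/Wengenroth), so it is less self-contained than it looks. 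In fact the step you flag as the main obstacle is the one that is free: once you have the topological isomorphism $\D'_E \cong s' \widehat{\otimes} E_d$, completeness is automatic, which is exactly the paper's shortcut. What your route buys is independence from Grothendieck's memoir and an explicit, elementary picture of why each property holds; what the paper's route buys is brevity and avoidance of the retractivity machinery.
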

\begin{proof}
$(i)$ As $s$ is quasinormable, 
 it follows from Theorem \ref{main} and \cite[Chapitre II, p.\ 76, Proposition 13]{G-ProdTensTopEspNucl} that $\D_{E}$ is quasinormable if and only if $E_{d}$ is quasinormable. The latter is true for any Banach space.

$(ii)$ Theorem \ref{main} implies that $\D^{\prime}_{E}$ is complete, while it is an $(LB)$-space by \eqref{tensor-2}. 

$(iii)$ As both $s$ and  $s'$ are Montel, it follows from Theorem \ref{main} and \cite[Chapitre II, p.\ 76, Proposition 13]{G-ProdTensTopEspNucl} that $\D_{E}$ and $\D^{\prime}_{E}$ are Montel if and only if $E_{d}$ is Montel. The latter is never true for an infinite dimensional Banach space.
\end{proof}
\begin{remark}
Proposition \ref{top}$(i)$ was shown in \cite[Theorem 5.6]{D-quasi} for the more general class of translation invariant  Banach spaces of distributions (see also \cite[Remark 7]{D-P-V}). 
 Proposition \ref{top}$(iii)$ generalizes \cite[Theorem 5.9]{D-quasi}, where this result is shown for solid TMIB, and gives a negative answer to the question asked in \cite[Remark 6]{D-P-V} for the class of TMIB. 
\end{remark}

\begin{corollary}\label{cor} Suppose that $E$ is a TMIB. Then, $(\mathcal{D}_E)' = \D'_{E'}$ as locally convex spaces. 
\end{corollary}
\begin{proof}
As already stated in Remark \ref{dual}, it was shown in \cite{D-P-V} that $(\mathcal{D}_E)' = \D'_{E'}$ as vector spaces and that the inclusion mapping $(\mathcal{D}_E)' \rightarrow \D'_{E'}$ is continuous. Since $(\mathcal{D}_E)'$ is webbed and $\D'_{E'}$ is ultrabornological (Proposition \ref{top}$(ii)$), the result is a consequence of De Wilde's open mapping theorem  \cite[Theorem 24.30]{M-V}.
\end{proof}
\begin{remark}
Corollary \ref{cor} implies that the equalities in \eqref{dualexc} hold topologically: the second and third one are particular instances of this result, while the last one is a consequence of the third one. The first one follows by noting that $\D'_{(C_{0,1/w})'} =  \D'_{L^1_w}$ as locally convex spaces or can be shown in the same way as Corollary \ref{cor} (the inclusion mapping  $(\dot{\mathcal{B}}_{1/w})' \rightarrow \D'_{L^1_w}$ is continuous by \cite[Corollary 5]{D-P-V}). 	
\end{remark}	
\begin{corollary}\label{cor-2} Suppose that $E$ is solid. Then, $\mathcal{D}'_{\mathcal{F}E} =\mathcal{F}E_{-\infty}$ as locally convex spaces.
\end{corollary}
\begin{proof}
By Propsition \ref{fourier-solid}, $\mathcal{D}'_{\mathcal{F}E} =\mathcal{F}E_{-\infty}$ as vector spaces. Moreover, it is clear that the inclusion mapping $\mathcal{F}E_{-\infty}  \to 	\mathcal{D}'_{\mathcal{F}E}$ is continuous. As $\mathcal{F}E_{-\infty}$ is webbed and $	\mathcal{D}'_{\mathcal{F}E}$ is ultrabornological (Proposition \ref{top}$(ii)$), the result is again a consequence of De Wilde's open mapping theorem \cite[Theorem 24.30]{M-V}.
\end{proof}

Finally, we  combine Theorem \ref{main} with Example \ref{examples-seq} to recover some well-known sequence space representations as well as obtain several new ones. 

\begin{examples}\label{examples-main}
$(i)$ Let $w$ be a polynomially bounded weight function on $\R^n$. We have that
$$
\D_{L^p_w} \cong \ell^p_w  \widehat{\otimes} s   \quad (1 \leq p < \infty), \qquad \mathcal{B}_{w} \cong \ell^\infty_w  \widehat{\otimes} s, \qquad  \dot{\mathcal{B}}_w  \cong c_{0,w}  \widehat{\otimes} s ,
$$
and
$$
\D'_{L^p_w} \cong \ell^p_w  \widehat{\otimes} s'   \quad (1 \leq p < \infty), \qquad \mathcal{B}'_{w} \cong \ell^\infty_w  \widehat{\otimes} s', \qquad  \dot{\mathcal{B}}_w \cong c_{0,w}  \widehat{\otimes} s' .
$$

$(ii)$ For $1 \leq p_1, p_2 < \infty$  we have that
$$
\D_{L^{p_1}(L^{p_2})} \cong \ell^{p_1}(\ell^{p_2})  \widehat{\otimes} s \qquad \mbox{and} \qquad \D'_{L^{p_1}(L^{p_2})} \cong \ell^{p_1}(\ell^{p_2})   \widehat{\otimes} s'. 
$$

$(iii)$ Suppose that $E$ is solid. By Example \ref{examples-test}$(ii)$  and Corollary \ref{cor-2}, we have that 
$$
\mathcal{F}E_{\infty} \cong (\mathcal{F}E)_d  \widehat{\otimes} s \qquad \mbox{and} \qquad \mathcal{F}E_{-\infty} \cong (\mathcal{F}E)_d  \widehat{\otimes} s^{\prime}. 
$$
We refer to Example \ref{examples-seq}$(ii)$  for an explicit description of the discrete space $(\mathcal{F}E)_d$ in terms of Fourier coefficients. For $E = L^p$, $1 \leq p \leq \infty$, this proves the isomorphisms in \eqref{sobolev-seq}. 

$(iv)$  Let $E_j$ be a TMIB on $\R^{n_j}$ for $j = 1,2$. Let $\tau$ denote either $\pi$ or $\varepsilon$. By Example \ref{examples-seq}$(iii)$, it holds that
$$
\mathcal{D}_{E_1 \widehat{\otimes}_\tau E_2} \cong (E_1)_d \widehat{\otimes}_\tau (E_2)_d  \widehat{\otimes}_\tau s, \qquad \mathcal{D}'_{E_1 \widehat{\otimes}_\tau E_2} \cong (E_1)_d \widehat{\otimes}_\tau (E_2)_d  \widehat{\otimes}_\tau s'.
$$
In particular, we have that for $1 \leq p_1, p_2 < \infty$
$$
\mathcal{D}_{L^{p_1} \widehat{\otimes}_\tau L^{p_2}} \cong \ell^{p_1} \widehat{\otimes}_\tau  \ell^{p_2} \widehat{\otimes}_\tau s, \qquad \mathcal{D}'_{L^{p_1} \widehat{\otimes}_\tau L^{p_2}} \cong \ell^{p_1} \widehat{\otimes}_\tau \ell^{p_2}\widehat{\otimes}_\tau s'.
$$
\end{examples}

\section{The Gabor frame characterization of $\D_{E}$ and $\D^{\prime}_{E}$}
\label{sec:GaborFrameChar}

\subsection{The short-time Fourier transform and frame operators on $\S^{\prime}(\R^{n})$}\label{subsect:info}

We briefly discuss the short-time Fourier transform and Gabor frames on $\S^{\prime}(\R^{n})$, we refer to \cite{G-FoundationsTimeFreqAnal} for more information. For $z = (x, \xi) \in \R^{2n}$, we write $\pi(z) = \pi(x, \xi) = M_{\xi} T_{x}$ for a time-frequency shift. We define the \emph{short-time Fourier transform} (STFT) of $f \in \S'(\R^{n})$ with respect to a window $\psi \in \S(\R^{n})$ as
	\[ V_{\psi} f(z) = V_{\psi} f(x, \xi) := \ev{f}{\pi(x, -\xi) \overline{\psi}}, \qquad z = (x, \xi) \in \R^{2n} . \] 
Then, $V_{\psi} f \in C(\R^{2n})$ and $\| V_{\psi} f\|_{L^{\infty}_{(1 + |\cdot|)^{-M}}} < \infty$ for some $M \in \N$. If $A \subset \S(\R^{n})$ is a bounded set,  the previous estimate holds uniformly for $f \in A$. As $\S^{\prime}(\R^{n})$ is bornological, it follows that $V_{\psi} : \S^{\prime}(\R^{n}) \rightarrow \S^{\prime}(\R^{2n})$ is continuous. Furthermore, the STFT induces the continuous mapping $V_{\psi} : \S(\R^{n}) \rightarrow \S(\R^{2n})$.

Next we consider Gabor frames. Fix a lattice $\Lambda$ in $\R^{2n}$. Given a window $\psi \in \S(\R^{n})$, we consider the \emph{analysis operator}
	\[ C_{\psi}  : \S^{\prime}(\R^{n}) \rightarrow s^{\prime}(\Lambda), \, \quad f \mapsto (V_{\psi} f(\lambda))_{\lambda \in \Lambda} , \]
and the \emph{synthesis operator}
	\[ D_{\psi} : s^{\prime}(\Lambda) \rightarrow \S^{\prime}(\R^{n}), \, \quad c \mapsto \sum_{\lambda \in \Lambda} c_{\lambda} \pi(\lambda) \psi , \]
which are both continuous linear mappings. Moreover, the series $\sum_{\lambda \in \Lambda} c_{\lambda} \pi(\lambda) \psi$ is absolutely summable in $\S^{\prime}(\R^{n})$. For $\psi, \gamma \in \S(\R^{n})$ we define
	\[ S_{\psi, \gamma} := D_{\gamma} \circ C_{\psi} : \S^{\prime}(\R^{n}) \rightarrow \S^{\prime}(\R^{n}) . \]
If $S_{\psi, \gamma} = \id_{\S^{\prime}(\R^{n})}$ we call $(\psi, \gamma)$ a \emph{pair of dual windows} on $\Lambda$. On lattices of the type $a \Z^{n} \times b \Z^{n}$, with $a, b > 0$, pairs of dual windows may be characterized by the Wexler-Raz biorthogonality relations.

	\begin{theorem}[{\cite[Theorem 7.3.1 and the subsequent remark]{G-FoundationsTimeFreqAnal}}]
		\label{eq:WexlerRaz}
		Let $\psi, \gamma \in \S(\R^{n})$ and let $a, b > 0$. Then, $(\psi, \gamma)$ is a pair of dual windows on $a\Z^{n} \times b\Z^{n}$ if and only if
			$$
				\int_{\R^n}\pi(k, l) \psi(x) \overline{\pi(k', l') \gamma(x)} dx = (ab)^{n} \delta_{k, k'} \delta_{l, l'} , \qquad (k, l), (k', l') \in \frac{1}{a} \Z^{n} \times \frac{1}{b} \Z^{n} ,
			$$
		or equivalently
			$$
				\frac{1}{(ab)^{n}} C_{\psi, \frac{1}{a} \Z^{n} \times \frac{1}{b} \Z^{n}} \circ D_{\gamma, \frac{1}{a} \Z^{n} \times \frac{1}{b} \Z^{n}} = \id_{s^{\prime}(\frac{1}{a} \Z^{n} \times \frac{1}{b} \Z^{n})} .
			$$
	\end{theorem}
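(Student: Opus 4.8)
The plan is to derive the theorem from the Janssen representation of the Gabor frame operator (the fundamental identity of Gabor analysis), which in the present Schwartz setting converges without any analytic subtlety. Throughout write $\Lambda = a\Z^{n} \times b\Z^{n}$ and let $M = \frac{1}{a}\Z^{n} \times \frac{1}{b}\Z^{n}$ denote the adjoint lattice appearing in the statement. First I would dispose of the equivalence of the two displayed conditions, which is purely formal. By definition $C_{\psi, M} \circ D_{\gamma, M}$ acts on $s^{\prime}(M)$ through the matrix whose $(\mu, \mu')$-entry is $V_{\psi}(\pi(\mu')\gamma)(\mu)$. A direct computation from the definition of the STFT gives $V_{\psi}(\pi(\mu')\gamma)(\mu) = (\pi(\mu')\gamma, \pi(\mu)\psi)_{L^{2}} = \overline{(\pi(\mu)\psi, \pi(\mu')\gamma)_{L^{2}}}$, so that this matrix equals $(ab)^{n}$ times the identity exactly when the biorthogonality relations of the first display hold. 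Hence the two displayed conditions are equivalent, and it remains to relate either of them to the operator identity $S_{\psi, \gamma} = \id_{\S^{\prime}(\R^{n})}$.

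The heart of the argument is the Janssen representation: for $\psi, \gamma \in \S(\R^{n})$ one has, as an identity of continuous operators on $\S^{\prime}(\R^{n})$,
\[ S_{\psi, \gamma, \Lambda} = \frac{1}{(ab)^{n}} \sum_{\mu \in M} (\psi, \pi(\mu)\gamma)_{L^{2}}\, \pi(\mu) . \]
To establish this I would first verify it on $\S(\R^{n})$: expanding $S_{\psi, \gamma} f = \sum_{\lambda \in \Lambda} V_{\psi} f(\lambda)\, \pi(\lambda)\gamma$ and applying Poisson summation to the $\Lambda$-sum produces precisely the $M$-sum on the right, with the factor $1/(ab)^{n}$ coming from the covolume of $\Lambda$. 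Because $\psi, \gamma \in \S(\R^{n})$, the sampled STFT and the coefficients $(\psi, \pi(\mu)\gamma)_{L^{2}} = \overline{V_{\gamma}\psi(\mu)}$ are rapidly decreasing, so every interchange of summation is legitimate by absolute convergence and Poisson summation applies rigorously. The extension to $\S^{\prime}(\R^{n})$ then follows from the density of $\S(\R^{n})$ and the continuity of all operators involved (Subsection \ref{subsect:info}), the right-hand side being a continuous operator on $\S^{\prime}(\R^{n})$ since $\mu \mapsto (\psi, \pi(\mu)\gamma)_{L^{2}}$ decays rapidly and each $\pi(\mu)$ is continuous on $\S^{\prime}(\R^{n})$.

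With the Janssen representation in hand, the condition $S_{\psi, \gamma} = \id_{\S^{\prime}(\R^{n})} = \pi(0)$ becomes $\sum_{\mu \in M} (\psi, \pi(\mu)\gamma)_{L^{2}}\, \pi(\mu) = (ab)^{n} \pi(0)$. Since the time-frequency shifts $\{ \pi(\mu) : \mu \in M \}$ are linearly independent as operators — the map sending a rapidly decreasing coefficient sequence to $\sum_{\mu} c_{\mu} \pi(\mu)$ being injective, as one sees from the injectivity of the spreading-function representation — the operator coefficients can be read off one at a time, so this holds if and only if $(\psi, \pi(\mu)\gamma)_{L^{2}} = (ab)^{n}\delta_{\mu, 0}$ for all $\mu \in M$. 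By the covariance relation used in the first step this is exactly the biorthogonality of the first display, closing the chain of equivalences.

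I expect the main obstacle to be the rigorous justification of the Janssen representation on $\S^{\prime}(\R^{n})$, and in particular the Poisson-summation interchange; the Schwartz regularity of the windows makes all series rapidly convergent and renders this essentially routine, so the only genuinely structural input is the linear independence of the time-frequency shifts $\pi(\mu)$, $\mu \in M$, which is what permits the passage from the operator identity to the scalar biorthogonality relations. Since the statement is quoted from \cite{G-FoundationsTimeFreqAnal}, one may alternatively simply invoke that reference.
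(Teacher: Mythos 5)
The paper itself offers no proof of this theorem: it is quoted, with citation, from Gr\"ochenig's book, so your closing fallback (``simply invoke that reference'') is literally what the paper does, and your substantive plan---Janssen representation plus linear independence of time--frequency shifts---is exactly the route taken in that reference. However, your proof contains one genuine error, and it is not cosmetic: the lattice $M=\frac{1}{a}\Z^{n}\times\frac{1}{b}\Z^{n}$ over which you write the Janssen representation is \emph{not} the adjoint lattice of $\Lambda=a\Z^{n}\times b\Z^{n}$. With the paper's convention $\pi(x,\xi)=M_{\xi}T_{x}$ one has $\pi(z)\pi(z')=e^{-2\pi i \xi'\cdot x}\pi(z+z')$ for $z=(x,\xi)$, $z'=(x',\xi')$, so $\pi(\mu)$ commutes with all $\pi(\lambda)$, $\lambda\in\Lambda$, precisely when $\mu\in\Lambda^{\circ}:=\frac{1}{b}\Z^{n}\times\frac{1}{a}\Z^{n}$: the roles of $a$ and $b$ swap between the time and frequency components. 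Since $S_{\psi,\gamma,\Lambda}$ commutes with every $\pi(\lambda)$, $\lambda\in\Lambda$, its expansion in time--frequency shifts can only involve shifts from this commutant (conjugation by $\pi(\lambda)$ multiplies the coefficient of $\pi(\mu)$ by a phase that is trivial for all $\lambda\in\Lambda$ exactly when $\mu\in\Lambda^{\circ}$; this uses the same spreading-function injectivity you invoke later). If you actually carry out your Poisson-summation step, the dual sum lands on $\Lambda^{\circ}$, not on $M$. Consequently the displayed identity $S_{\psi,\gamma,\Lambda}=\frac{1}{(ab)^{n}}\sum_{\mu\in M}(\psi,\pi(\mu)\gamma)_{L^{2}}\,\pi(\mu)$ is false in general for $a\neq b$, and the chain of equivalences built on it collapses: duality on $\Lambda$ is equivalent to biorthogonality over $\Lambda^{\circ}$ and says nothing about the inner products $(\pi(\mu)\psi,\pi(\mu')\gamma)_{L^{2}}$ with $\mu,\mu'\in M$.

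In fairness, this defect is inherited from the statement you were asked to prove: the theorem as printed mis-transcribes Gr\"ochenig's Theorem 7.3.1, where the relations read $(\alpha\beta)^{-d}\langle\gamma,T_{k/\beta}M_{n/\alpha}g\rangle=\delta_{k0}\delta_{n0}$, i.e.\ they run over $\frac{1}{b}\Z^{n}\times\frac{1}{a}\Z^{n}$ in the present notation; for $a\neq b$ the statement with $M$ in place of $\Lambda^{\circ}$ is false. A careful execution of your own plan would have detected the swap rather than reproduced it. Apart from this, your argument is sound: the identification of the matrix of $C_{\psi,M}\circ D_{\gamma,M}$ with the Gram-type matrix $\bigl((\pi(\mu')\gamma,\pi(\mu)\psi)_{L^{2}}\bigr)_{\mu,\mu'}$, the covariance reduction of the two-index relations to the single condition $(\psi,\pi(\mu)\gamma)_{L^{2}}=(ab)^{n}\delta_{\mu,0}$, and the passage from the operator identity to the scalar relations via injectivity of the spreading representation (legitimate here because the coefficients decay rapidly) assemble into a complete proof once every occurrence of $M$ is replaced by $\Lambda^{\circ}$. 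Note finally that the swap is consequential downstream: in the proof of Theorem \ref{main} the present theorem is applied with $a=1$, $b<1$, where $\Lambda^{\circ}=\frac{1}{b}\Z^{n}\times\Z^{n}$, so the corrected statement yields complementation of $s(\Z^{n};E_{d}(\tfrac{1}{b}\Z^{n}))$ in $\D_{E}$ rather than of $s(\tfrac{1}{b}\Z^{n};E_{d}(\Z^{n}))\cong E_{d}\widehat{\otimes}s$; this is repaired by invoking Wexler--Raz for a second Gaussian frame, on $b\Z^{n}\times\Z^{n}$, whose adjoint lattice is $\Z^{n}\times\frac{1}{b}\Z^{n}$.
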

	
Given a window $\psi \in \S(\R^{n})$, the mapping $C_{\psi} : L^{2}(\R^{n}) \rightarrow \ell^{2}(\Lambda)$ and its adjoint $D_{\psi} :  \ell^{2}(\Lambda) \rightarrow L^{2}(\R^{n})$ are   well-defined and continuous. The set of time-frequency shifts
	\[ \mathcal{G}(\Lambda, \psi) := \{ \pi(\lambda) \psi \mid \lambda \in \Lambda \} \]
is called a \emph{Gabor frame} if there are $A, B > 0$ such that
	\[ A \|f\|_{L^{2}} \leq \|(V_{\psi}f(\lambda))_{\lambda \in \Lambda}\|_{l^{2}(\Lambda)} \leq B \|f\|_{L^{2}} , \quad f \in L^{2}(\R^{n}) . \]
In this case $S = S_{\psi, \psi}$ is a bounded positive invertible linear operator on $L^{2}(\R^{n})$. Set $\gamma^{\circ} = S^{-1} \psi$. Suppose that $\Lambda  = a \Z^{n} \times b \Z^{n}$ for some $a,b > 0$. A classical result of Janssen \cite[Corollary 13.5.4]{G-FoundationsTimeFreqAnal} states that $\gamma^{\circ} \in \S(\R^{n})$. As $S$ and $\pi$ commute on $\Lambda$, $(\psi, \gamma^{\circ})$ form a pair of dual windows on $\Lambda$.  We call $\gamma^\circ$ the \emph{canonical dual window} of $\psi$. Finally, we remark that, for the Gaussian $\psi(x) = e^{-\pi|x|^2}$, $\mathcal{G}(a\Z^{n} \times b\Z^{n}, \psi)$ is a Gabor frame if and only if $ab < 1$ (cf.\  \cite[Theorem 7.5.3]{G-FoundationsTimeFreqAnal}).


\subsection{Continuity of the frame operators on $\D_{E}$ and $\D^{\prime}_{E}$}

Let $E$ be a TMIB or DTMIB on $\R^n$. We fix two lattices $\Lambda_{0}$ and $\Lambda_{1}$ in $\R^{n}$ and consider the product lattice $\Lambda = \Lambda_{0} \times \Lambda_{1}$ in $\R^{2n}$. We now establish the mapping properties of the analysis and synthesis operators on the spaces $\D_{E}$ and $\D^{\prime}_{E}$. 
	\begin{proposition}
		\label{p:GaborDE}
		For $\psi \in \S(\R^{n})$ the mappings
			\[ C_{\psi} : \D_{E} \rightarrow s(\Lambda_{1}; E_{d}(\Lambda_{0})) \qquad \mbox{and} \qquad D_{\psi} : s(\Lambda_{1}; E_{d}(\Lambda_{0})) \rightarrow \D_{E} \]
		are well-defined and continuous. 
	\end{proposition}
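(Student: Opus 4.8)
The plan is to treat the two operators through the ``separation of variables'' built into the product lattice $\Lambda = \Lambda_0 \times \Lambda_1$: the spatial lattice $\Lambda_0$ is handled at the level of $E$ and $E_{d}(\Lambda_0)$ by Propositions \ref{S-result} and \ref{eq:SpsiIntoE}, while the frequency lattice $\Lambda_1$ is handled by trading smoothness against polynomial decay. The two basic identities I would use are, first, the covariance of the STFT under differentiation,
$$
\xi_j V_\psi f = \frac{-1}{2\pi i}\left( V_\psi(f^{(e_j)}) + V_{\psi^{(e_j)}} f \right), \qquad f \in \S'(\R^n),
$$
which iterates to express $\xi^\beta V_\psi f$ as a finite linear combination of the functions $V_{\psi^{(\delta)}}(f^{(\alpha)})$ with $\alpha + \delta = \beta$; and second, the dual identity
$$
\partial^\alpha \big( \pi(\lambda_0, \lambda_1)\psi \big) = \sum_{\gamma \leq \alpha} \binom{\alpha}{\gamma}(2\pi i)^{|\gamma|} \lambda_1^{\gamma}\, \pi(\lambda_0,\lambda_1)\psi^{(\alpha - \gamma)},
$$
obtained by differentiating $M_{\lambda_1}T_{\lambda_0}\psi$. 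Throughout I will absorb the operator norms of the modulations via the polynomial bound $\| M_{\lambda_1} \|_{\mathcal{L}(E)} \leq C_1(1 + |\lambda_1|)^{\tau_1}$ from \eqref{twf}.

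For the analysis operator, fix $f \in \D_E$. For fixed $\lambda_1$, the formula $V_\psi f(x,\xi) = (M_{-\xi} f) * \check{\overline{\psi}}(x)$ shows that the $\Lambda_0$-sequence $(V_\psi f(\lambda_0, \lambda_1))_{\lambda_0}$ equals $R_{\check{\overline{\psi}}}(M_{-\lambda_1} f)$, and likewise $(V_{\psi^{(\delta)}}(f^{(\alpha)})(\lambda_0,\lambda_1))_{\lambda_0} = R_{\check{\overline{\psi^{(\delta)}}}}(M_{-\lambda_1} f^{(\alpha)})$. By Proposition \ref{eq:SpsiIntoE} each of these lies in $E_{d}(\Lambda_0)$ with norm bounded by a Schwartz norm of the window times $\| M_{-\lambda_1} f^{(\alpha)} \|_E \leq C_1 (1+|\lambda_1|)^{\tau_1} \| f^{(\alpha)}\|_E$. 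Applying the first covariance identity to estimate $\lambda_1^\beta (V_\psi f(\lambda_0,\lambda_1))_{\lambda_0}$ in $E_{d}(\Lambda_0)$ and summing over $|\beta| \leq K$ then yields
$$
(1 + |\lambda_1|)^{K}\,\big\| (V_\psi f(\lambda_0, \lambda_1))_{\lambda_0} \big\|_{E_{d}(\Lambda_0)} \leq C (1+|\lambda_1|)^{\tau_1} \| f \|_{E, K},
$$
uniformly in $\lambda_1$. As $K$ is arbitrary, this is precisely the statement that $C_\psi f \in s(\Lambda_1; E_{d}(\Lambda_0))$ together with the continuity of $C_\psi$.

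For the synthesis operator, let $c \in s(\Lambda_1; E_{d}(\Lambda_0))$. Since $E_{d}(\Lambda_0) \hookrightarrow s'(\Lambda_0)$ by \eqref{eq:ContIncEd} and $c$ decays rapidly in $\lambda_1$, we have $c \in s'(\Lambda)$, so $D_\psi c$ is defined in $\S'(\R^n)$ by an absolutely convergent series and may be differentiated termwise. Using the second identity, I would write $(D_\psi c)^{(\alpha)}$ as a finite combination over $\gamma \leq \alpha$ of $\sum_{\lambda_1} M_{\lambda_1} S_{\psi^{(\alpha-\gamma)}}\big( (\lambda_1^\gamma c_{\lambda_0,\lambda_1})_{\lambda_0}\big)$, where $S_\varphi$ is the synthesis map on the spatial lattice $\Lambda_0$. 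Proposition \ref{S-result} bounds the inner term in $E$ by a Schwartz norm of $\psi^{(\alpha - \gamma)}$ times $|\lambda_1|^{|\gamma|} \| c_{\cdot, \lambda_1}\|_{E_{d}(\Lambda_0)}$; after applying $M_{\lambda_1}$ and invoking \eqref{twf} this is $\lesssim (1+|\lambda_1|)^{\tau_1 + |\alpha|}\| c_{\cdot, \lambda_1}\|_{E_{d}(\Lambda_0)}$. Summing over $\lambda_1$ and using that $\| c_{\cdot, \lambda_1}\|_{E_{d}(\Lambda_0)}$ decays faster than any power of $(1+|\lambda_1|)$ shows that the series converges absolutely in $E$, that $(D_\psi c)^{(\alpha)} \in E$, and that $\| D_\psi c\|_{E, N}$ is controlled by a seminorm of $c$ in $s(\Lambda_1; E_{d}(\Lambda_0))$. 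Hence $D_\psi c \in \D_E$ and $D_\psi$ is continuous.

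The conceptual crux, and the only genuinely delicate point, is the exchange of regularity and decay encoded in the two covariance identities: it is the membership $f^{(\alpha)} \in E$ (not merely $f \in E$) that produces the decay in the frequency variable $\lambda_1$ needed to land in the Fr\'echet space $s(\Lambda_1; E_{d}(\Lambda_0))$, and conversely it is the rapid decay of $c$ in $\lambda_1$ that compensates the factors $|\lambda_1|^{|\gamma|}$ arising from differentiating the modulated atoms. Everything else reduces to the two continuity statements on the spatial lattice (Propositions \ref{S-result} and \ref{eq:SpsiIntoE}) and the polynomial modulation bound \eqref{twf}, so no new hard estimate is required beyond organizing these inputs.
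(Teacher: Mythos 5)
Your proposal is correct and follows essentially the same route as the paper's own proof: for $C_{\psi}$, the identification $(V_{\psi}f(\lambda_0,\lambda_1))_{\lambda_0} = R_{\check{\overline{\psi}}}(M_{-\lambda_1}f)$ combined with Proposition \ref{eq:SpsiIntoE}, the modulation bound \eqref{twf}, and the identity expressing $\xi^{\beta}V_{\psi}f$ through $V_{\psi^{(\delta)}}(f^{(\alpha)})$; for $D_{\psi}$, termwise differentiation, the Leibniz formula for $\partial^{\alpha}(M_{\lambda_1}T_{\lambda_0}\psi)$, recognition of the inner sums as $S_{\psi^{(\alpha-\gamma)}}$, and Proposition \ref{S-result}. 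The only differences are cosmetic (signs in the covariance identity, which are irrelevant to the norm estimates, and your more explicit summation over $\lambda_1$ in the synthesis part, which the paper leaves implicit).
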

	
	\begin{proof} Throughout the proof $\tau_{j},C_j$, $j = 0,1$, will denote the constants occurring in \eqref{twf}. Furthermore, we will make use of the mappings from Proposition \ref{S-result} and Proposition \ref{eq:SpsiIntoE}.
	
		We first consider $C_{\psi}$.  Note that $V_{\psi} g(x, \xi)  = (M_{-\xi} g) * \check{\overline{\psi}} (x)$ for $g \in \S'(\R^n)$. Hence,  for $\lambda_1 \in \Lambda_1$ fixed, it holds that 
		 $$
		 (V_{\varphi} g(\lambda_{0}, \lambda_{1}))_{\lambda_{0}\in \Lambda_0} = ((M_{-\lambda_1} g) \ast \check{\overline{\varphi}}(\lambda_0))_{\lambda_0 \in \Lambda_0} = R_{\check{\overline{\varphi}}}(M_{-\lambda_1} g)
		 $$
		 for all $g \in \S'(\R^n)$ and $\varphi \in \S(\R^n)$. 
		 By Proposition \ref{eq:SpsiIntoE}, we have that for all $e \in E$ and  $\varphi \in \S(\R^n)$ it holds that $(V_{\varphi} e(\lambda_{0}, \lambda_{1}))_{\lambda_{0}\in \Lambda_0} \in E_d(\Lambda_0)$ and that there are $C > 0$ and $N \in \N$ (independent of $e$ and $\varphi$) such that 
$$
 \| (V_{\varphi} e(\lambda_{0}, \lambda_{1}))_{\lambda_{0} \in \Lambda_{0}} \|_{E_{d}(\Lambda_{0})} \leq C \|M_{-\lambda_{1}} e\|_{E} \|\varphi\|_{\S^{N}} \leq C C_{1} (1 + |\lambda_{1}|)^{\tau_{1}} \|e\|_{E} \|\varphi\|_{\S^{N}}
$$
  For all $f \in \S^{\prime}(\R^{n})$ and $\alpha \in \N^{n}$  it holds that 
			\[ (2 \pi i \xi)^{\alpha} V_{\psi} f(x, \xi) = \sum_{\beta \leq \alpha} (-1)^{|\beta|} {\alpha \choose \beta} V_{\psi^{(\alpha - \beta)}} f^{(\beta)}(x, \xi) . \]
We obtain that, for all $f \in \D_E$,
			\[ \| (V_{\psi} f(\lambda_{0}, \lambda_{1}))_{\lambda_{0} \in \Lambda_{0}} \|_{E_{d}(\Lambda_{0})} \leq C C_{1} (\sqrt{n} / \pi)^{|\alpha|} \|\psi\|_{\S^{N + |\alpha|}} \frac{(1 + |\lambda_{1}|)^{\tau_{1}}}{|\lambda_{1}|^{|\alpha|}} \|f\|_{E, |\alpha|} . \]
	This shows that $C_{\psi} : \D_{E} \rightarrow s(\Lambda_{1}; E_{d}(\Lambda_{0}))$ is well-defined and continuous.
		
		Next, we consider $D_{\psi}$. For all $c = (c_{\lambda_0,\lambda_1})_{(\lambda_0,\lambda_1) \in \Lambda} \in s'(\Lambda)$ and $\alpha \in \N^n$ it holds that
		\begin{align*}
		\partial^{\alpha} D_{\psi}(c) &= \sum_{\beta \leq \alpha} {\alpha \choose \beta} \sum_{\lambda_{1} \in \Lambda_{1}} (2 \pi i \lambda_{1})^{\beta} M_{\lambda_{1}} \sum_{\lambda_{0} \in \Lambda_{0}} c_{\lambda_0, \lambda_1} T_{\lambda_{0}} \psi^{(\alpha - \beta)} \\
		&= \sum_{\beta \leq \alpha} {\alpha \choose \beta} \sum_{\lambda_{1} \in \Lambda_{1}} (2 \pi i \lambda_{1})^{\beta} M_{\lambda_{1}} S_{\psi^{(\alpha - \beta)}} ((c_{\lambda_0,\lambda_1})_{\lambda_0 \in \Lambda_0}) .
		\end{align*}
Hence, Proposition \ref{S-result}  implies that for all  $c  \in s(\Lambda_{1}; E_{d}(\Lambda_{0}))$ and $\alpha \in \N^n$ it holds that $\partial^{\alpha} D_{\psi}(c) \in E$ and that there are $C >0$ and $N \in \N$ (independent of $c$ and $\alpha$) such that
	\begin{align*}
	\|\partial^{\alpha} D_{\psi}(c)\|_E &\leq C_1 (4\pi)^{|\alpha|} \sum_{\lambda_{1} \in \Lambda_{1}} (1 +|\lambda_1|)^{\tau_1 + |\alpha|} \max_{\beta \leq \alpha} \|S_{\psi^{(\alpha - \beta)}} ((c_{\lambda_0,\lambda_1})_{\lambda_0 \in \Lambda_0})\|_E \\
	&\leq CC_1 (4\pi)^{|\alpha|}\|\psi\|_{\S^{N + |\alpha|}} \|c\|_{\ell^{1}_{(1 + |\cdot|)^{\tau_{1} + |\alpha|}}(\Lambda_{1}; E_{d}(\Lambda_{0}))} .
\end{align*}
This shows that $D_{\psi} : s(\Lambda_{1}; E_{d}(\Lambda_{0})) \rightarrow \D_{E}$ is well-defined and continuous.
\end{proof}
	
	

	
%

	\begin{proposition}
		\label{p:GaborD'E}
		For  $\psi \in \S(\R^{n})$ the mappings
			\[ C_{\psi} : \D^{\prime}_{E} \rightarrow s^{\prime}(\Lambda_{1}; E_{d}(\Lambda_{0})) \qquad \mbox{and} \qquad D_{\psi} : s^{\prime}(\Lambda_{1}; E_{d}(\Lambda_{0})) \rightarrow \D^{\prime}_{E} \]
		are well-defined and continuous. 
	\end{proposition}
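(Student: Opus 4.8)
The plan is to mirror the proof of Proposition \ref{p:GaborDE}, treating the synthesis and analysis operators separately, but now keeping in mind that both the source and the target are $(LB)$-spaces: $s^{\prime}(\Lambda_1; E_d(\Lambda_0)) = \varinjlim_{N \in \N} \ell^\infty_{(1+|\,\cdot\,|)^{-N}}(\Lambda_1; E_d(\Lambda_0))$, while $\D^{\prime}_E$ carries the topology induced from $\mathcal{L}_b(\S(\R^n),E)$. The sequence-space inputs remain Propositions \ref{S-result} and \ref{eq:SpsiIntoE} together with the polynomial modulation bound $\|M_\xi\|_{\mathcal{L}(E)} \leq C_1(1+|\xi|)^{\tau_1}$. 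I would treat $D_\psi$ first, as its continuity feeds the later discussion of $C_\psi$.

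For $D_\psi$: given $c \in s^{\prime}(\Lambda_1; E_d(\Lambda_0))$ and $\varphi \in \S(\R^n)$, using $(M_{\lambda_1} h) * \varphi = M_{\lambda_1}(h * M_{-\lambda_1}\varphi)$ and that translation commutes with convolution, one computes
\[ D_\psi(c) * \varphi = \sum_{\lambda_1 \in \Lambda_1} M_{\lambda_1} S_{\psi * M_{-\lambda_1}\varphi}\big((c_{\lambda_0, \lambda_1})_{\lambda_0 \in \Lambda_0}\big). \]
Proposition \ref{S-result} bounds each term in $E$ by $C(1+|\lambda_1|)^{\tau_1}\|(c_{\lambda_0,\lambda_1})_{\lambda_0}\|_{E_d(\Lambda_0)}\|\psi * M_{-\lambda_1}\varphi\|_{\S^N}$. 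The decisive point is that $\psi * M_{-\lambda_1}\varphi$ is rapidly decreasing in $\lambda_1$ in every Schwartz seminorm, since $\mathcal{F}(\psi * M_{-\lambda_1}\varphi) = \widehat{\psi}\cdot T_{-\lambda_1}\widehat{\varphi}$ is the product of a fixed Schwartz function with a far-translated one. This decay dominates the polynomial growth coming from $M_{\lambda_1}$ and from $c$, so the series converges absolutely in $E$ and $\varphi \mapsto D_\psi(c)*\varphi$ is continuous, whence $D_\psi(c) \in \D^{\prime}_E$. As $s^{\prime}(\Lambda_1; E_d(\Lambda_0))$ is an inductive limit, continuity of $D_\psi$ need only be checked on each Banach step $\ell^\infty_{(1+|\,\cdot\,|)^{-N}}(\Lambda_1; E_d(\Lambda_0))$, where the same estimate gives the bound uniformly on the unit ball.

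For $C_\psi$ I would again argue slice by slice: for fixed $\lambda_1$ one has $(V_\psi f(\lambda_0,\lambda_1))_{\lambda_0} = R_{\check{\overline{\psi}}}(M_{-\lambda_1}f)$ with $M_{-\lambda_1}f \in \D^{\prime}_E$. Since $f \notin E$ in general, Proposition \ref{eq:SpsiIntoE} cannot be applied directly as in the $\D_E$ case; instead I would factor $\check{\overline{\psi}} = \chi_1 * \chi_2$ with $\chi_1, \chi_2 \in \S(\R^n)$ (always possible, if necessary after writing $\check{\overline{\psi}}$ as a finite sum of such convolutions), so that $R_{\check{\overline{\psi}}}(M_{-\lambda_1}f) = R_{\chi_2}\big((M_{-\lambda_1}f)*\chi_1\big)$ with $(M_{-\lambda_1}f)*\chi_1 \in E$. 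Proposition \ref{eq:SpsiIntoE} together with $(M_{-\lambda_1}f)*\chi_1 = M_{-\lambda_1}(f*M_{\lambda_1}\chi_1)$ and the modulation bound then yields $\|(V_\psi f(\lambda_0,\lambda_1))_{\lambda_0}\|_{E_d(\Lambda_0)} \leq CC_1(1+|\lambda_1|)^{\tau_1}\|f*M_{\lambda_1}\chi_1\|_E\|\chi_2\|_{\S^N}$, which for fixed $f$ grows at most polynomially in $\lambda_1$; hence $C_\psi f \in s^{\prime}(\Lambda_1; E_d(\Lambda_0))$. For the topological part I would first show $C_\psi$ maps bounded sets to bounded sets: a bounded subset $\mathcal{B}$ of $\D^{\prime}_E \subset \mathcal{L}_b(\S,E)$ is pointwise bounded, hence equicontinuous by the Banach--Steinhaus theorem (as $\S(\R^n)$ is barrelled), so $\sup_{f \in \mathcal{B}}\|f*M_{\lambda_1}\chi_1\|_E \leq C_{\mathcal{B}}(1+|\lambda_1|)^{N'}$, which places $C_\psi(\mathcal{B})$ in a single step of $s^{\prime}(\Lambda_1; E_d(\Lambda_0))$.

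The main obstacle is precisely the upgrade from boundedness to continuity of $C_\psi$ into the $(LB)$-space, and the difficulty is structural: the topology of $\D^{\prime}_E$ only controls $f$ against \emph{bounded} families in $\S(\R^n)$, whereas $C_\psi$ tests $f$ against the time-frequency shifts $\pi(\lambda)\psi$, which form an unbounded family, so no single defining seminorm of $\D^{\prime}_E$ dominates $C_\psi f$. I expect to resolve this by De Wilde's closed graph theorem: the target is webbed (being an $(LB)$-space), and the graph of $C_\psi$ is closed because $C_\psi$ is continuous into the larger space $s^{\prime}(\Lambda)$ (Subsection \ref{subsect:info}), into which $s^{\prime}(\Lambda_1; E_d(\Lambda_0))$ injects continuously by \eqref{eq:ContIncEd} and \eqref{tensor-2}, so the graph is the preimage of $\{0\}$ under a continuous map. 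The genuinely delicate ingredient is that $\D^{\prime}_E$ must be ultrabornological for the theorem to apply; since Proposition \ref{top}$(ii)$ asserts exactly this but is a \emph{consequence} of Theorem \ref{main}, I may not invoke it, and must instead establish the ultrabornologicity of $\D^{\prime}_E$ by hand at this stage — for instance from the identification $\mathcal{L}_b(\S(\R^n),E) \cong \S^{\prime}(\R^n)\widehat{\otimes} E$, the fact that strong duals of Fréchet--Schwartz spaces are ultrabornological, and a complementation argument realizing $\D^{\prime}_E$ (equivalently, its image $\operatorname{im}(C_\psi \circ D_{\gamma^\circ})$, a complemented subspace of $s^{\prime}(\Lambda_1; E_d(\Lambda_0))$ cut out by the continuous idempotent $C_\psi D_{\gamma^\circ}$, whose rapid off-diagonal matrix decay makes it continuous independently of $C_\psi$) inside an ultrabornological space. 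Securing this linear-topological structure of $\D^{\prime}_E$ without circularity is what I expect to be the real work.
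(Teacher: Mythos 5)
Your treatment of $D_{\psi}$ is correct and is essentially the paper's own proof: your kernel $\psi \ast M_{-\lambda_{1}}\varphi$ is precisely $V_{\check{\overline{\psi}}}\varphi(\,\cdot\,,\lambda_{1})$, so your expansion of $D_{\psi}(c)\ast\varphi$ is the paper's formula, and the rapid decay in $\lambda_{1}$ together with Proposition \ref{S-result} closes the argument. Likewise, your computations for $C_{\psi}$ --- the slice identity, the factorization $\check{\overline{\psi}}=\chi_{1}\ast\chi_{2}$ via Miyazaki (which gives a single convolution, so your hedge about finite sums is unnecessary), the resulting estimate through Proposition \ref{eq:SpsiIntoE} and the modulation bound, and the Banach--Steinhaus argument showing that $C_{\psi}$ maps bounded sets of $\D^{\prime}_{E}$ into bounded sets of $s^{\prime}(\Lambda_{1};E_{d}(\Lambda_{0}))$ --- all match the paper's proof. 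What is missing is the last step, and your own diagnosis is accurate: nothing you have proved upgrades boundedness of $C_{\psi}$ on $\D^{\prime}_{E}$ to continuity, since $\D^{\prime}_{E}$ is not known (at this stage) to be bornological.

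Your proposed repair, however, is circular and cannot be completed as sketched. De Wilde's closed graph theorem requires the \emph{domain} $\D^{\prime}_{E}$ to be ultrabornological, which is exactly Proposition \ref{top}$(ii)$, a consequence of Theorem \ref{main}. Your complementation argument does yield an ultrabornological space: one can even make the idempotent $P=C_{\psi}\circ D_{\gamma^{\circ}}$ continuous on $s^{\prime}(\Lambda_{1};E_{d}(\Lambda_{0}))$ without solidity-type matrix estimates (which are indeed unavailable for non-solid $E$), because that space is bornological, $D_{\gamma^{\circ}}$ is continuous and $C_{\psi}$ is bounded, so the composition is continuous. But transferring ultrabornologicity from $\im P$ to $\D^{\prime}_{E}$ requires the bijection $D_{\gamma^{\circ}}:\im P\rightarrow \D^{\prime}_{E}$ to be a topological isomorphism, and its inverse is precisely $C_{\psi}|_{\D^{\prime}_{E}}$ --- the continuity you are trying to establish. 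The paper breaks this circle by never making any claim about $\D^{\prime}_{E}$ itself: it factors $C_{\psi}=A_{2}\circ A_{1}\circ A_{0}$, where $A_{0}:\D^{\prime}_{E}\rightarrow \mathcal{L}_{b}(\S(\R^{n}),E)$ is the defining embedding (continuous by the very definition of the topology of $\D^{\prime}_{E}$), and $A_{1}$ is defined on \emph{arbitrary} operators $T\in\mathcal{L}(\S(\R^{n}),E)$ by $T\mapsto ((T(M_{\lambda_{1}}\psi_{0})\ast M_{\lambda_{1}}\psi_{1})(\lambda_{0}))_{(\lambda_{0},\lambda_{1})\in\Lambda}$ --- your formula with $f\ast(\,\cdot\,)$ replaced by an abstract $T$. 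Since $\mathcal{L}_{b}(\S(\R^{n}),E)\cong \S^{\prime}(\R^{n})\widehat{\otimes}E\cong s^{\prime}\widehat{\otimes}E$ is an $(LB)$-space by \eqref{tensor-2}, hence bornological, your Banach--Steinhaus boundedness estimate already proves continuity of $A_{1}$ there; the residual phase factors are absorbed by a third operator $A_{2}$ handled via Proposition \ref{S-result}. In short, bornologicity is needed only for the ambient operator space, never for its subspace $\D^{\prime}_{E}$, and that is the one structural idea your proposal lacks.
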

	
	\begin{proof}
		 We first consider $C_\psi$. By  \cite[Lemma 1]{M-DistElemSpDist}, there are $\psi_{0}, \psi_{1} \in \S(\R^{n})$ such that $\psi = \psi_{0} * \psi_{1}$. Consider the mappings 
			\begin{align*}
				A_{0} &: \D^{\prime}_{E} \rightarrow \mathcal{L}_{b}(\S(\R^{n}), E) , f \mapsto (\varphi \mapsto f \ast \varphi), \\ 
				A_{1} &: \mathcal{L}_{b}(\S(\R^{n}), E) \rightarrow s^{\prime}( \Lambda_{1};E_{d}(\Lambda_{0})) , T \mapsto ((T(M_{\lambda_{1}} \check{\overline{\psi_{0}}}) * M_{\lambda_{1}} \check{\overline{\psi_{1}}})(\lambda_{0}))_{(\lambda_0, \lambda_1) \in \Lambda} , \\ 							
				A_{2} &: s^{\prime}(\Lambda_{1}; E_{d}(\Lambda_{0})) \rightarrow s^{\prime}( \Lambda_{1};E_{d}(\Lambda_{0})) , (c_{\lambda_0,\lambda_1})_{(\lambda_0, \lambda_1) \in \Lambda} \mapsto (e^{-2 \pi i \lambda_{0} \cdot \lambda_{1}} c_{\lambda_0,\lambda_1})_{(\lambda_0, \lambda_1) \in \Lambda}.
							\end{align*}  
		As $V_{\psi} f(x, \xi) = e^{-2 \pi i \xi \cdot x} (f * M_{\xi} \check{\overline{\psi_{0}}} * M_{\xi} \check{\overline{\psi_{1}}}) (x)$, it follows that $C_{\psi} = A_{2} \circ A_{1} \circ A_{0}$. Consequently, it suffices to verify that $A_0$, $A_1$, and $A_2$ are well-defined and continuous. For $A_{0}$ this holds true by definition of the topology of $\D^{\prime}_{E}$. Next, we consider $A_{1}$. 
		Since $\mathcal{S}(\R^n) \cong s$ is a nuclear Fr\'echet space, we have that
$$
\mathcal{L}_{b}(\S(\R^{n}), E)  \cong \mathcal{S}'(\R^n) \widehat{\otimes} E \cong s' \widehat{\otimes}  E.
$$		
Hence, \eqref{tensor-2} implies that $\mathcal{L}_{b}(\S(\R^{n}), E)$ is an $(LB)$-space and thus bornological. Therefore, it is enough to show that $A_{1}$ maps bounded sets into bounded sets. Let $B$ be an arbitrary bounded subset of $\mathcal{L}_{b}(\S(\R^{n}), E)$. As the set $B$ is equicontinuous by the Banach-Steinhaus theorem, there are $C > 0$ and $N \in \N$ such that
\[ \sup_{T \in B} \|T(M_{\lambda_{1}} \check{\overline{\psi_{0}}})\|_{E} \leq C \|M_{\lambda_{1}} \check{\overline{\psi_{0}}}\|_{\S^{N}} \leq C (4 \pi)^{N} \|\check{\overline{\psi_{0}}}\|_{\S^{N}} (1 + |\lambda_{1}|)^{N}, \qquad \lambda_1 \in \Lambda_1.  \]
Proposition \ref{eq:SpsiIntoE}  yields that, for fixed $\lambda_1 \in \Lambda_1$,  $((T(M_{\lambda_{1}} \check{\overline{\psi_{0}}}) * M_{\lambda_{1}} \check{\overline{\psi_{1}}})(\lambda_{0}))_{\lambda_0 \in \Lambda_0} = R_{M_{\lambda_1} \check{\overline{\psi_{1}}}}(T(M_{\lambda_1}\check{\overline{\psi_{0}}})) \in E_d(\Lambda_0)$ for all $T \in B$ and that there are $C' > 0$ and $N' \in \N$ such that 
\begin{align*}
&\sup_{T \in B} \|((T(M_{\lambda_{1}} \check{\overline{\psi_{0}}}) * M_{\lambda_{1}} \check{\overline{\psi_{1}}})(\lambda_{0}))_{\lambda_{0} \in \Lambda_{0}}\|_{E_{d}(\Lambda_{0})} \\
&\qquad \qquad \qquad \qquad \leq C'\sup_{T \in B} \|T(M_{\lambda_1}\check{\overline{\psi_{0}}})\|_E \|M_{\lambda_1} \check{\overline{\psi_{1}}}\|_{\S^{N'}}\\
&\qquad \qquad \qquad \qquad \leq CC' (4 \pi)^{N+N'} \|\check{\overline{\psi_{0}}}\|_{\S^{N}} \| \check{\overline{\psi_{1}}}\|_{\S^{N'}}(1 + |\lambda_{1}|)^{N+N'}, 
\end{align*}
which shows that $A_1(B)$ is bounded  in $s^{\prime}( \Lambda_{1};E_{d}(\Lambda_{0}))$. Finally, we consider $A_{2}$.  Fix $\chi \in \D(U) \backslash \{0\}$.  For all $c = (c_{\lambda_0,\lambda_1})_{(\lambda_0,\lambda_1) \in \Lambda} \in s'(\Lambda)$ it holds that
$$
S_{\chi}( (e^{-2 \pi i \lambda_{0} \cdot \lambda_{1}} c_{\lambda_0,\lambda_1})_{\lambda_0 \in \Lambda_0}) = M_{\lambda_1}S_{M_{\lambda_1}\chi}( (c_{\lambda_0,\lambda_1})_{\lambda_0 \in \Lambda_0}), \qquad \lambda_1 \in \Lambda_1, 
$$
whence the statement follows from Proposition \ref{S-result}.

Next, we consider $D_{\psi}$. For all $c = (c_{\lambda_0,\lambda_1})_{(\lambda_0,\lambda_1) \in \Lambda} \in s'(\Lambda)$ and $\varphi \in \mathcal{S}(\R^n)$ it holds that
		\begin{align*}
		 D_{\psi}(c) \ast \varphi &=  \sum_{(\lambda_0,\lambda_1) \in \Lambda}   c_{\lambda_0,\lambda_1} M_{\lambda_1} T_{\lambda_0} \psi \ast \varphi \\
		&= \sum_{(\lambda_0,\lambda_{1}) \in \Lambda}  c_{\lambda_0,\lambda_1} M_{\lambda_1} T_{\lambda_0} V_{\check{\overline{\psi}}} \varphi( \, \cdot \,, \lambda_1) \\
		&= \sum_{\lambda_{1}\in \Lambda_1} M_{\lambda_1} \sum_{\lambda_{0}\in \Lambda_0}   c_{\lambda_0,\lambda_1} T_{\lambda_0} V_{\check{\overline{\psi}}} \varphi( \, \cdot \,, \lambda_1) \\
		&=  \sum_{\lambda_{1}\in \Lambda_1} M_{\lambda_1}  S_{V_{\check{\overline{\psi}}} \varphi( \, \cdot \,, \lambda_1)} ((c_{\lambda_0,\lambda_1})_{\lambda_0 \in \Lambda_0})
		\end{align*}
The result is therefore a consequence of Proposition \ref{S-result} and the continuity of the mapping $V_{\check{\overline{\psi}}}: \S(\R^n) \rightarrow \S(\R^{2n})$.

	\end{proof}
	
We are now sufficiently prepared to prove our main result. 
	
\begin{proof}[Proof of Theorem \ref{main}]	
By the Pe\l czy\'nski decomposition method (in the version of Vogt) \cite[Proposition 1.2 and the subsequent remark]{V-SeqSpRepSpTestFuncDist}, it suffices to show that $\D_{E}$ and $\D'_E$ are isomorphic to a complemented subspace of  $E_{d} \widehat{\otimes} s$ and $E_{d} \widehat{\otimes} s^{\prime}$, respectively, and, vice versa, that $E_{d} \widehat{\otimes} s$ and $E_{d} \widehat{\otimes} s^{\prime}$ are isomorphic to a complemented subspace of $\D_{E}$ and $\D'_E$, respectively.  Fix $0 < b < 1$. By \eqref{tensor-1} and \eqref{tensor-2}, we have that  $E_{d} \widehat{\otimes} s \cong s(b\Z^n; E_{d}(\Z^n))$ and $E_{d} \widehat{\otimes} s' \cong s'(b\Z^n; E_{d}(\Z^n))$.  Set $\psi = e^{-\pi |x|^2} \in \S(\R^n)$. Then, $\mathcal{G}(\psi, \Z^n \times b\Z^n)$ is a Gabor frame and the canonical dual window $\gamma^\circ$ of $\psi$ also belongs to $\S(\R^n)$ (see the end of  Subsection \ref{subsect:info}). Since $(\psi, \gamma^\circ)$ is a pair of dual windows on $\Z^n \times b\Z^n$, we have that 
$$
D_{\gamma^\circ, \Z^{n} \times b \Z^{n}} \circ C_{\psi, \Z^{n} \times b \Z^{n}} = \id_{\S'(\R^n)}
$$
and (Proposition \ref{eq:WexlerRaz})
$$
\frac{1}{b^{n}} C_{\psi, \Z^{n} \times \frac{1}{b} \Z^{n}} \circ D_{\gamma^\circ, \Z^{n} \times \frac{1}{b} \Z^{n}} = \id_{s^{\prime}(\Z^{n} \times \frac{1}{b} \Z^{n})} .
$$
Hence, the statements follow from Proposition \ref{p:GaborDE} and Proposition \ref{p:GaborD'E}.

\end{proof}

\section{The isomorphic structure of the spaces $\D_{L^{p_1} \widehat{\otimes}_\pi L^{p_2}}$}\label{sect-iso}
In this final section we use the sequence space representations from Example \ref{examples-main} to study the isomorphic structure of the spaces $\D_{L^{p_1} \widehat{\otimes}_\pi L^{p_2}}$. Our goal is to show that these spaces are genuinely new and distinct in the following sense.
\begin{proposition}\label{isotensor-2} Let $1 <  p_1, p_2 < \infty$.   
\begin{itemize}
\item[$(i)$]  $\D_{L^{p_1} \widehat{\otimes}_\pi L^{p_2}}$ is not isomorphic to a complemented subspace of $\mathcal{D}_{L^{q_1}(L^{q_2})}$, $1 \leq q_1, q_2 < \infty$, $\dot{\mathcal{B}}$, or $\mathcal{B}$.
\item[$(ii)$] Let $1 < q_1, q_2 < \infty$.  Then, $\D_{L^{p_1} \widehat{\otimes}_\pi L^{p_2}} \cong \D_{L^{q_1} \widehat{\otimes}_\pi L^{q_2}}$ if and only if $\{p_1,p_2\} = \{q_1, q_2\}$.
\end{itemize}
\end{proposition}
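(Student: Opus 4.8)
The plan is to push the whole problem down to the level of the Banach factors, using the sequence space representations from Example~\ref{examples-main} together with a functional analytic lemma of D\'iaz. By Example~\ref{examples-main}$(i)$, $(ii)$ and $(iv)$ we may rewrite every space in sight as a completed tensor product with $s$: namely $\mathcal{D}_{L^{p_1}\widehat{\otimes}_\pi L^{p_2}} \cong (\ell^{p_1}\widehat{\otimes}_\pi \ell^{p_2})\widehat{\otimes} s$, $\mathcal{D}_{L^{q_1}(L^{q_2})} \cong \ell^{q_1}(\ell^{q_2})\widehat{\otimes} s$, $\dot{\mathcal{B}} \cong c_0\widehat{\otimes} s$ and $\mathcal{B} \cong \ell^\infty\widehat{\otimes} s$. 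The lemma from \cite{Diaz} that I would invoke states that, for Banach spaces $X$ and $Y$, the Fr\'echet space $X\widehat{\otimes} s$ is isomorphic to a complemented subspace of $Y\widehat{\otimes} s$ if and only if $X$ is isomorphic to a complemented subspace of $Y$. With this in hand, every assertion about complemented embeddings (and, \emph{a fortiori}, isomorphisms) among the Fr\'echet spaces becomes the corresponding assertion about the Banach factors $\ell^{p_1}\widehat{\otimes}_\pi \ell^{p_2}$, $\ell^{q_1}(\ell^{q_2})$, $c_0$ and $\ell^\infty$.

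For part~$(i)$ I would then reduce to three Banach space statements, namely that $\ell^{p_1}\widehat{\otimes}_\pi \ell^{p_2}$ is not isomorphic to a complemented subspace of $\ell^{q_1}(\ell^{q_2})$, of $c_0$, or of $\ell^\infty$. The last two are soft. Every infinite-dimensional complemented subspace of $c_0$ is isomorphic to $c_0$, while $\ell^{p_1}\widehat{\otimes}_\pi \ell^{p_2}$ contains a complemented subspace isometric to $\ell^{p_1}$ with $p_1<\infty$ and hence is not isomorphic to $c_0$. Every infinite-dimensional complemented subspace of $\ell^\infty$ is isomorphic to $\ell^\infty$, thus non-separable, whereas $\ell^{p_1}\widehat{\otimes}_\pi \ell^{p_2}$ is separable. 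The substantial statement concerns $\ell^{q_1}(\ell^{q_2})$: this space carries an unconditional basis and therefore has local unconditional structure, a property that is inherited by complemented subspaces; on the other hand $\ell^{p_1}\widehat{\otimes}_\pi \ell^{p_2}$ fails to have local unconditional structure for $1<p_1,p_2<\infty$ by Gordon--Lewis \cite{GL}, and this contradiction rules out any such complemented embedding (uniformly in $q_1,q_2$).

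For part~$(ii)$ the implication $\{p_1,p_2\}=\{q_1,q_2\}\Rightarrow \mathcal{D}_{L^{p_1}\widehat{\otimes}_\pi L^{p_2}}\cong \mathcal{D}_{L^{q_1}\widehat{\otimes}_\pi L^{q_2}}$ is immediate from the commutativity of the projective tensor product, so the content is the converse. An isomorphism between the two Fr\'echet spaces exhibits each as a complemented subspace of the other; applying the lemma of \cite{Diaz} in both directions, I obtain that $\ell^{p_1}\widehat{\otimes}_\pi \ell^{p_2}$ and $\ell^{q_1}\widehat{\otimes}_\pi \ell^{q_2}$ are each isomorphic to a complemented subspace of the other. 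The isomorphic classification of these projective tensor products \cite{AK, CM} then forces the unordered pairs to agree, the separating invariant being, for instance, the set of exponents $r$ for which $\ell^r$ embeds complementably (which one expects to recover exactly as $\{p_1,p_2\}$).

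I expect the genuine difficulty to lie entirely in the Banach space inputs of part~$(i)$: the failure of local unconditional structure for $\ell^{p_1}\widehat{\otimes}_\pi \ell^{p_2}$ and the verification that the hypotheses of \cite{AK, CM, GL} indeed cover the full open range $1<p_1,p_2<\infty$. By contrast, the reduction through the lemma of \cite{Diaz}, the commutativity argument in $(ii)$, and the separability and $c_0$-structure arguments are routine once this framework is in place.
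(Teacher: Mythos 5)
The reduction lemma you attribute to \cite{Diaz} is misstated, and as stated it is false. D\'iaz's actual lemma (Lemma \ref{key} in the paper) says only: if a normed space $Y$ is complemented in a locally convex space $X$, then $Y$ is complemented in the \emph{local Banach spaces} $X_p$ for all sufficiently large continuous seminorms $p$. Applied to $X \widehat{\otimes} s \cong s(X)$, whose local Banach spaces are $\ell^r_{(1+|\cdot|)^N}(X) \cong \ell^r(X)$, this yields $Y \underset{c}{\subset} \ell^r(X)$ --- \emph{not} $Y \underset{c}{\subset} X$. To descend from $\ell^r(X)$ to $X$ one needs the extra hypothesis $\ell^r(X) \cong X$ for some $r<\infty$ (or $c_0(X)\cong X$), which is precisely what the paper's Corollary \ref{basic} isolates. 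Without it your ``iff'' fails: take $Y$ a hereditarily indecomposable Banach space and $X = Y \oplus Y$; since $s \oplus s \cong s$, one has $X \widehat{\otimes} s \cong Y \widehat{\otimes} s$, yet $X$ is not isomorphic to a complemented subspace of $Y$, as otherwise $Y$ would split into two infinite-dimensional closed subspaces.

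This gap is repairable for the targets $\ell^{q_1}(\ell^{q_2})$ and $c_0$, since $\ell^{q_1}\bigl(\ell^{q_1}(\ell^{q_2})\bigr) \cong \ell^{q_1}(\ell^{q_2})$ and $c_0(c_0) \cong c_0$ (a verification your write-up omits), but it is fatal in the two places where the real difficulty sits. First, $\ell^\infty$ satisfies neither $\ell^r(\ell^\infty) \cong \ell^\infty$ nor $c_0(\ell^\infty) \cong \ell^\infty$, so for $\mathcal{B}$ the correct reduction only gives $\ell^{p_1}\widehat{\otimes}_\pi\ell^{p_2} \underset{c}{\subset} \ell^1(\ell^\infty)$; this space is non-separable and far from prime, so your separability/Lindenstrauss argument has nothing to bite on. The paper instead notes that then $\ell^{p_1}, \ell^{p_2} \underset{c}{\subset} \ell^1(\ell^\infty)$ and invokes \cite[Theorem 3.2]{CM}, which excludes this for $p_1,p_2 \neq 1$. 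Second, in part $(ii)$ the descent from $s(X)\cong s(Y)$, with $X = \ell^{p_1}\widehat{\otimes}_\pi\ell^{p_2}$ and $Y = \ell^{q_1}\widehat{\otimes}_\pi\ell^{q_2}$, to mutual complemented embedding of $X$ and $Y$ (and its upgrade to $X \cong Y$ via Pe\l czy\'nski decomposition) requires $\ell^{t(p_1,p_2)}(X)\cong X$; this is the deep theorem of Arias--Farmer recorded as Lemma \ref{cool}$(ii)$ (\cite[Theorem 1.3]{AF}), which you never mention and cannot do without. Likewise, the final separating invariant comes from \cite[Theorem 4.1]{AF} (the set of $r$ with $\ell^r$ embedding as a \emph{closed} subspace of $\ell^{q_1}\widehat{\otimes}_\pi\ell^{q_2}$ is $\{q_1,q_2,t(q_1,q_2)\}$), not from \cite{AK} or \cite{CM}.
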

We need some preparation for the proof of Proposition \ref{isotensor-2}. Given two locally convex spaces $X$ and $Y$, we write $Y \underset{c}{\subset} X$ to indicate that  $Y$ is isomorphic to a complemented subspace of $X$. We denote by $\operatorname{csn}(X)$ the family of all continuous seminorms on $X$. Furthermore, for $p \in \operatorname{csn}(X)$ we denote by $X_p$ the local Banach space associated to $p$, that is, the completion of $X / \ker p$ with respect to the induced norm $p$. We have the following simple but useful observation from \cite{Diaz}. 

\begin{lemma}\cite[Lemma 1]{Diaz} \label{key}
Let $X$ be a locally convex space. If $Y$ is a normed space with $Y \underset{c}{\subset}X$, then there is a $p_0 \in \operatorname{csn}(X)$ such that $Y \underset{c}{\subset} X_p$ for all $p \in \operatorname{csn}(X)$ with $p \geq p_0$. \end{lemma}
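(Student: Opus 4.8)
The plan is to rephrase the hypothesis as the existence of a split pair of maps and then transport this pair through the canonical projection onto each local Banach space. Concretely, $Y \underset{c}{\subset} X$ means that there are continuous linear maps $i \colon Y \to X$ and $r \colon X \to Y$ with $r \circ i = \id_Y$; here $i$ is the isomorphism of $Y$ onto the complemented subspace and $r$ is the projection of $X$ onto that subspace composed with $i^{-1}$. I would take this description as the starting point.

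First I would produce the seminorm $p_0$. Since $Y$ is normed and $r$ is continuous, the preimage $r^{-1}(B_Y)$ of the open unit ball of $Y$ is a zero-neighborhood in $X$, hence contains $\{ x \in X : q(x) < 1 \}$ for some $q \in \operatorname{csn}(X)$; a standard homogeneity argument then upgrades this to the bound $\|r(x)\|_Y \leq q(x)$ for all $x \in X$. Setting $p_0 := q$, this is exactly the seminorm asserted in the statement.

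Now fix $p \in \operatorname{csn}(X)$ with $p \geq p_0$ and let $Q_p \colon X \to X_p$ be the canonical continuous map into the local Banach space, whose image $X/\ker p$ is dense in $X_p$. From $\|r(x)\|_Y \leq p_0(x) \leq p(x)$ we obtain $\ker p \subseteq \ker r$, so that $r$ factors as $r = \widetilde{r} \circ Q_p$ for a linear map $\widetilde{r} \colon X/\ker p \to Y$ of norm at most $1$ with respect to the norm induced by $p$. I would then set $j := Q_p \circ i \colon Y \to X_p$, which is continuous, and let $s \colon X_p \to Y$ be the continuous extension of $\widetilde{r}$ to the completion $X_p$. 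Then $s \circ j = \widetilde{r} \circ Q_p \circ i = r \circ i = \id_Y$, so the pair $(j,s)$ witnesses $Y \underset{c}{\subset} X_p$, and this works for every $p \geq p_0$.

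The crux — and the only step that is not purely formal — is the extension of $\widetilde{r}$ from the dense subspace $X/\ker p$ to the full completion $X_p$ with target still equal to $Y$; this is precisely where completeness of $Y$ is needed, so one uses that $Y$ is in fact Banach (as is automatic for the local Banach spaces of the concrete function and distribution spaces to which the lemma will be applied). Everything else — continuity of $Q_p$, the factorization through $\ker p$, and the identity $s \circ j = \id_Y$ — is formal, so the entire content of the lemma lies in manufacturing $p_0$ out of the continuity of $r$ together with this extension to the completion.
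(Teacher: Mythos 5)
The paper offers no proof of this lemma --- it is quoted from \cite{Diaz} --- so your argument can only be measured against the standard one, and it matches it: encode $Y \underset{c}{\subset} X$ as a split pair $r \circ i = \id_Y$, extract $p_0$ from the continuity of $r$ into the normed space $Y$, and for $p \geq p_0$ factor $r$ through $X/\ker p$ and pass to the completion. Every individual step checks out: the homogeneity upgrade to $\|r(x)\|_Y \leq p_0(x)$, the inclusion $\ker p \subseteq \ker r$ (note that the quotient seminorm of $Q_p x$ in $X/\ker p$ is exactly $p(x)$, so $\widetilde{r}$ indeed has norm at most $1$), the continuity of $j = Q_p \circ i$, and the identity $s \circ j = \id_Y$, which exhibits $j(Y)$ as the range of the continuous projection $j \circ s$ on $X_p$.

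Your closing caveat is the one point worth sharpening, and you should state it more strongly: completeness of $Y$ is not merely what your particular extension step happens to consume --- it is necessary for the conclusion as stated. A complemented subspace of the Banach space $X_p$ is the range of a continuous projection, hence closed, hence complete; so a normed space isomorphic to a complemented subspace of some $X_p$ must itself be Banach. Concretely, take $X = Y = c_{00}$ with the supremum norm, so that $Y \underset{c}{\subset} X$ trivially; given any candidate $p_0 \leq C\|\cdot\|$, the seminorm $p = C\|\cdot\| \geq p_0$ yields $X_p = c_0$, in which $Y$ is dense and proper, hence not isomorphic to any complemented subspace. Thus the lemma, read literally for normed $Y$ with the completed local Banach space $(X/\ker p, p)\,\hat{}\,$, silently presupposes $Y$ complete; without completeness your construction still goes through but only delivers $\widehat{Y} \underset{c}{\subset} X_p$ (extend both $\widetilde{r}$ and $j$ to $\widehat{Y}$, using that $X_p$ is complete). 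Since every application in the paper --- Corollary \ref{key}'s consequences in Corollary \ref{basic} and Proposition \ref{isotensor-2} --- takes $Y$ to be one of the Banach spaces $\ell^p$, $c_0$, $\ell^{p_1}\widehat{\otimes}_\pi \ell^{p_2}$, or $\ell^{p_1}(\ell^{p_2})$, your proof covers everything that is actually used.
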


\begin{corollary}\label{basic}
Let $X$ and $Y$ be Banach spaces. Suppose that $\ell^p(X) \cong X$ for some $1 \leq p < \infty$ or $c_0(X) \cong X$. 
\begin{itemize} 
\item[$(i)$] If $s(Y) \underset{c}{\subset} s(X)$, then $Y \underset{c}{\subset} X$. 
\item[$(ii)$] Assume additionally that $\ell^p(Y) \cong Y$ for some $1 \leq p < \infty$ or $c_0(Y) \cong Y$.  If $s(X) \cong s(Y)$, then $X \cong Y$.
\end{itemize}
\end{corollary}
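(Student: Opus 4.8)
The plan is to deduce both parts from Dí az's Lemma (Lemma \ref{key}) combined with the Pe\l czy\'nski decomposition method, the decisive point being a careful choice of the fundamental system of seminorms on $s(X)$.

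For part $(i)$, I would first note that $Y$ is complemented in $s(Y)$: the map $\iota : Y \to s(Y)$, $y \mapsto (y\,\delta_{j,0})_j$, and the coordinate evaluation $s(Y) \to Y$, $c \mapsto c_0$, are continuous and compose to $\id_Y$. Hence $Y \underset{c}{\subset} s(Y) \underset{c}{\subset} s(X)$, so $Y \underset{c}{\subset} s(X)$. Since $Y$ is a Banach space, Lemma \ref{key} provides a $p_0 \in \operatorname{csn}(s(X))$ with $Y \underset{c}{\subset} (s(X))_p$ for every $p \in \operatorname{csn}(s(X))$ satisfying $p \geq p_0$. It then suffices to produce one such $p$ whose local Banach space is isomorphic to $X$.

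The crux is that $s(X)$ carries two cofinal families of norms with different local Banach spaces. With $p_N(c) = \sup_j \|c_j\|_X (1+|\cdot|)^N$, the space $(s(X))_{p_N}$ is the closure of $s(X)$ in $\ell^\infty_{(1+|\cdot|)^N}(X)$, which equals $c_{0,(1+|\cdot|)^N}(X) \cong c_0(X)$ (finitely supported sequences are dense and lie in $s(X)$, which in turn sits in $c_{0,w}(X)$). With instead $q_N(c) = \bigl\|\,(\|c_j\|_X (1+|\cdot|)^N)_j\,\bigr\|_{\ell^p}$, which is a continuous norm on $s(X)$ obeying $p_N \le q_N \le C\, p_{N+M}$ for suitable $M$, the space $(s(X))_{q_N}$ is the closure of $s(X)$ in $\ell^p_{(1+|\cdot|)^N}(X)$, which is all of $\ell^p_{(1+|\cdot|)^N}(X) \cong \ell^p(X)$ because $p<\infty$. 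Now, if $c_0(X)\cong X$ I would take $p = C' p_N$ with $N$ large enough that $C'p_N \ge p_0$, so that $(s(X))_p \cong c_0(X)\cong X$; if instead $\ell^p(X)\cong X$ I would take $p = C' q_N$ with $N$ large, giving $(s(X))_p \cong \ell^p(X)\cong X$. Either way Lemma \ref{key} yields $Y \underset{c}{\subset} X$.

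For part $(ii)$, from $s(X)\cong s(Y)$ I obtain $s(Y)\underset{c}{\subset} s(X)$ and $s(X)\underset{c}{\subset} s(Y)$; applying part $(i)$ to the first (using the hypothesis on $X$) and to the second with the roles of $X,Y$ interchanged (using the hypothesis on $Y$) gives $Y \underset{c}{\subset} X$ and $X \underset{c}{\subset} Y$, say $X \cong Y\oplus B$ and $Y \cong X\oplus A$. The self-similarity hypotheses give $X\cong X\oplus X$ and $Y\cong Y\oplus Y$ (for instance $X\cong \ell^p(X)\cong X\oplus \ell^p(X)\cong X\oplus X$). The Pe\l czy\'nski decomposition method then concludes: $X\oplus Y \cong (Y\oplus B)\oplus Y \cong (Y\oplus Y)\oplus B \cong Y\oplus B \cong X$ and, symmetrically, $X\oplus Y \cong X\oplus(X\oplus A)\cong (X\oplus X)\oplus A\cong X\oplus A\cong Y$, whence $X\cong X\oplus Y\cong Y$. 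I would stress that here the possibly different sum-types of $X$ and $Y$ cause no difficulty, since only the squaring property is used. The main obstacle I anticipate is in part $(i)$: the na\"ive sup-norms produce the local Banach space $c_0(X)$, which need not contain $X$ complementably under the $\ell^p$-self-similarity hypothesis (indeed $c_0(\ell^p)\not\underset{c}{\subset}\ell^p$); overcoming this requires observing that the $\ell^p$-type norms $q_N$ generate the same topology yet have local Banach spaces $\ell^p(X)$, so that Dí az's Lemma can be supplied with the seminorm matching the given form of self-similarity.
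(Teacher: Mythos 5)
Your proof is correct and follows essentially the same route as the paper: part $(i)$ applies D\'iaz's Lemma \ref{key} after observing that $\{\|\cdot\|_{\ell^p_{(1+|\cdot|)^r}(X)}\}_r$ (resp.\ the sup-norms with local Banach spaces $c_{0,w}(X)$) is a fundamental system of norms on $s(X)$ whose local Banach spaces are $\ell^p_{(1+|\cdot|)^r}(X)\cong\ell^p(X)$ (resp.\ $c_0(X)$), and part $(ii)$ is the Pe\l czy\'nski decomposition argument that the paper cites from \cite[Theorem 2.2.3]{AK} and you write out in full. The only blemish is the wording in your final remark --- $c_0(X)$ always contains $X$ complementably; what can fail, as your parenthetical correctly states, is $c_0(X)\underset{c}{\subset}X$ --- but this does not affect the proof.
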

\begin{proof}
$(i)$ Suppose that $\ell^p(X) \cong X$ for some $1 \leq p < \infty$, the case $c_0(X) \cong X$ is similar. Since $Y \underset{c}{\subset} s(Y)$, we obtain that $Y \underset{c}{\subset} s(X)$.  As $\{ \| \, \cdot \, \|_{\ell^p_{(1+|\, \cdot \,|)^r}(X)} \, | \, r > 0 \}$ is a fundamental system of norms for $s(X)$ and the local Banach space associated to $\| \, \cdot \, \|_{\ell^p_{(1+|\, \cdot \,|)^r}(X)}$, $r > 0$, is given by $\ell^p_{(1+|\, \cdot \,|)^r}(X)$, Lemma \ref{key} implies that  there exists a $r > 0$ such that  $Y \underset{c}{\subset} \ell^p_{(1+|\, \cdot \,|)^r}(X) \cong \ell^p(X) \cong X$. 

\noindent $(ii)$ We have that both $X \underset{c}{\subset} Y$ and $Y \underset{c}{\subset} X$ by $(i)$. Hence, the result follows from the Pe\l czy\'nski  decomposition method \cite[Theorem 2.2.3]{AK}.
\end{proof}

Next, we recall several fundamental results concerning the isomorphic structure of the spaces  $\ell^{p_1} \widehat{\otimes}_\pi \ell^{p_2}$ from \cite{AF,GL}.  We set 
$$
t(p_1,p_2) = \frac{1}{\min \{ 1, \frac{1}{p_1} + \frac{1}{p_2}\}}, \qquad p_1,p_2 > 1.
$$

\begin{lemma} \label{cool}  Let $1 < p_1,p_2 < \infty$. 
\begin{itemize}
\item[$(i)$] \cite[Corollary 3.6]{GL} $\ell^{p_1} \widehat{\otimes}_\pi \ell^{p_2}$ is not isomorphic to a complemented subspace of a Banach space with an unconditional basis.
\item[$(ii)$] \cite[Theorem 1.3]{AF} $\ell^{t(p_1,p_2)}( \ell^{p_1} \widehat{\otimes}_\pi \ell^{p_2}) \cong  \ell^{p_1} \widehat{\otimes}_\pi \ell^{p_2}$.
\item[$(iii)$] \cite[Theorem 4.1]{AF} Let $1 \leq p < \infty$. Then, $\ell^p$ is isomorphic to a closed subspace of  $\ell^{p_1} \widehat{\otimes}_\pi \ell^{p_2}$ if and only if $p \in \{ p_1,p_2, t(p_1,p_2) \}$.
\end{itemize}
\end{lemma}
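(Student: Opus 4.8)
The plan is to transfer the entire statement to the Banach‑space level via the isomorphisms of Example \ref{examples-main}. Combining these with \eqref{tensor-1} and \eqref{tensor-2} (which give $X\widehat{\otimes}s\cong s(X)$), I would first record the identifications
\[ \D_{L^{p_1}\widehat{\otimes}_\pi L^{p_2}}\cong s(\ell^{p_1}\widehat{\otimes}_\pi\ell^{p_2}),\quad \mathcal{D}_{L^{q_1}(L^{q_2})}\cong s(\ell^{q_1}(\ell^{q_2})),\quad \dot{\mathcal{B}}\cong s(c_0),\quad \mathcal{B}\cong s(\ell^\infty). \]
The proposition then reduces to questions about the Banach spaces in the fibres, to be resolved with Corollary \ref{basic} together with the structural facts of Lemma \ref{cool}. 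Throughout I write $Y=\ell^{p_1}\widehat{\otimes}_\pi\ell^{p_2}$, which is separable and infinite‑dimensional.

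For part $(i)$ with ambient space $\mathcal{D}_{L^{q_1}(L^{q_2})}\cong s(\ell^{q_1}(\ell^{q_2}))$ or $\dot{\mathcal{B}}\cong s(c_0)$ I would argue by contradiction. Both $X=\ell^{q_1}(\ell^{q_2})$ (with $1\leq q_1,q_2<\infty$) and $X=c_0$ satisfy the self‑similarity hypothesis of Corollary \ref{basic}, since $\ell^{q_1}(\ell^{q_1}(\ell^{q_2}))\cong\ell^{q_1}(\ell^{q_2})$ and $c_0(c_0)\cong c_0$. Hence, if $s(Y)$ were complemented in $s(X)$, Corollary \ref{basic}$(i)$ would give $Y\underset{c}{\subset}X$; as both $\ell^{q_1}(\ell^{q_2})$ and $c_0$ possess an unconditional basis, this contradicts Lemma \ref{cool}$(i)$.

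The main obstacle is the space $\mathcal{B}\cong s(\ell^\infty)$, because $\ell^\infty$ neither satisfies the self‑similarity hypothesis of Corollary \ref{basic} (indeed $\ell^p(\ell^\infty)\not\cong\ell^\infty$ for $1\leq p<\infty$, since $\ell^p(\ell^\infty)$ contains the complemented, separable subspace $\ell^p$) nor carries an unconditional basis, so neither tool of the previous paragraph applies. Here I would apply Lemma \ref{key} directly: assuming for contradiction that $s(Y)$ is complemented in $s(\ell^\infty)$, we obtain $Y\underset{c}{\subset}s(Y)\underset{c}{\subset}s(\ell^\infty)$, and since the canonical norms of $s(\ell^\infty)$ have local Banach spaces $\ell^\infty_{(1+|\,\cdot\,|)^N}(\ell^\infty)\cong\ell^\infty$, Lemma \ref{key} (applied to the normed space $Y$) yields $Y\underset{c}{\subset}\ell^\infty$. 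As $Y$ is separable and infinite‑dimensional, this contradicts the classical theorem of Lindenstrauss that every infinite‑dimensional complemented subspace of $\ell^\infty$ is isomorphic to $\ell^\infty$. (When $\tfrac{1}{p_1}+\tfrac{1}{p_2}\geq 1$ one could alternatively use that $\ell^\infty$ is a Grothendieck space, whence its separable complemented subspaces are reflexive, contradicting $\ell^1\subseteq Y$; but Lindenstrauss settles all cases uniformly.)

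For part $(ii)$ the ``if'' direction is immediate from the symmetry of $\widehat{\otimes}_\pi$. For ``only if'', observe that $\ell^{t(p_1,p_2)}(Y)\cong Y$ by Lemma \ref{cool}$(ii)$ with $1\leq t(p_1,p_2)<\infty$, and likewise for the $q$‑space, so both fibres satisfy the hypothesis of Corollary \ref{basic}$(ii)$. Thus $s(\ell^{p_1}\widehat{\otimes}_\pi\ell^{p_2})\cong s(\ell^{q_1}\widehat{\otimes}_\pi\ell^{q_2})$ forces $\ell^{p_1}\widehat{\otimes}_\pi\ell^{p_2}\cong\ell^{q_1}\widehat{\otimes}_\pi\ell^{q_2}$. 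By Lemma \ref{cool}$(iii)$ the set of $p\in[1,\infty)$ for which $\ell^p$ embeds as a closed subspace of $\ell^{p_1}\widehat{\otimes}_\pi\ell^{p_2}$ equals $\{p_1,p_2,t(p_1,p_2)\}$, and this set is an isomorphic invariant, so $\{p_1,p_2,t(p_1,p_2)\}=\{q_1,q_2,t(q_1,q_2)\}$. The remaining step is elementary: for $1<p_1,p_2<\infty$ one has $t(p_1,p_2)<\min(p_1,p_2)$, so $t(p_1,p_2)$ is the strict minimum of its set and deleting it leaves exactly $\{p_1,p_2\}$; the same holds on the $q$‑side, and equality of the two sets therefore gives $\{p_1,p_2\}=\{q_1,q_2\}$.
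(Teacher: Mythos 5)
Your write-up does not prove the statement it was assigned. The statement is Lemma \ref{cool} itself, i.e.\ the three Banach-space-theoretic facts about $\ell^{p_1} \widehat{\otimes}_\pi \ell^{p_2}$: part $(i)$ is the Gordon--Lewis theorem (resting on the failure of local unconditional structure for $\ell^{p_1} \widehat{\otimes}_\pi \ell^{p_2}$, \cite[Corollary 3.6]{GL}), and parts $(ii)$ and $(iii)$ are the Arias--Farmer self-similarity isomorphism and their classification of the $\ell^p$-subspaces of $\ell^{p_1} \widehat{\otimes}_\pi \ell^{p_2}$ (\cite[Theorems 1.3 and 4.1]{AF}). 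What you have written instead is a proof of Proposition \ref{isotensor-2}, and it invokes Lemma \ref{cool}$(i)$, $(ii)$, and $(iii)$ as black boxes at every key step (``this contradicts Lemma \ref{cool}$(i)$'', ``$\ell^{t(p_1,p_2)}(Y) \cong Y$ by Lemma \ref{cool}$(ii)$'', and the appeal to $(iii)$ at the end). As a proof of the lemma this is circular and contains none of its actual content. Note that the paper itself offers no proof of Lemma \ref{cool} either --- it is stated as a quotation of \cite{GL} and \cite{AF} --- so a faithful blind attempt would either reproduce those (genuinely deep) arguments or explicitly record the results as external citations; your proposal does neither, it only consumes them downstream.

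A secondary remark, in case your text is recycled as a proof of Proposition \ref{isotensor-2}: your treatment of the case $\mathcal{B} \cong s(\ell^\infty)$ deviates from the paper and has a flaw. The local Banach space attached to the norm $\| \cdot \|_{\ell^\infty_{(1+|\cdot|)^N}(\ell^\infty)}$ on $s(\ell^\infty)$ is by definition the \emph{completion} of $s(\ell^\infty)$ under that norm, and rapidly decreasing sequences are not dense in the weighted sup-norm space; the completion is $c_{0,(1+|\cdot|)^N}(\Z^n;\ell^\infty) \cong c_0(\ell^\infty)$, not $\ell^\infty_{(1+|\cdot|)^N}(\ell^\infty) \cong \ell^\infty$. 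Hence Lemma \ref{key} only yields $Y \underset{c}{\subset} c_0(\ell^\infty)$, and Lindenstrauss's theorem on complemented subspaces of $\ell^\infty$ no longer applies. This is precisely why the paper instead uses the equivalent fundamental system of $\ell^1$-type norms on $s(\ell^\infty)$, whose local Banach spaces are $\ell^1_{(1+|\cdot|)^r}(\ell^\infty) \cong \ell^1(\ell^\infty)$ (here density does hold), deduces $\ell^{p_1}, \ell^{p_2} \underset{c}{\subset} \ell^1(\ell^\infty)$, and concludes via \cite[Theorem 3.2]{CM}. The remainder of your reduction (Example \ref{examples-main}, Corollary \ref{basic}, and the elementary observation $t(p_1,p_2) < \min(p_1,p_2)$ in part $(ii)$) does match the paper's argument for the proposition --- but none of it bears on Lemma \ref{cool}.
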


\begin{proof}[Proof of Proposition \ref{isotensor-2}]  We employ the sequence space representations from Example \ref{examples-main}.

$(i)$ Suppose first that $\D_{L^{p_1} \widehat{\otimes}_\pi L^{p_2}}  \underset{c}{\subset}  \mathcal{D}_{L^{q_1}(L^{q_2})}$ or $\dot{\mathcal{B}}$. Corollary \ref{basic}$(i)$ would then imply that $\ell^{p_1} \widehat{\otimes}_\pi \ell^{p_2} \underset{c}{\subset} \ell^{q_1}(\ell^{q_2})$ or $c_0$. As the latter spaces have an unconditional basis, this is impossible by Lemma \ref{cool}$(i)$. Next, suppose that $\D_{L^{p_1} \widehat{\otimes}_\pi L^{p_2}}  \underset{c}{\subset} \mathcal{B}$. Proceeding as in the proof of Corollary \ref{basic}$(i)$, we obtain that $\ell^{p_1} \widehat{\otimes}_\pi \ell^{p_2} \underset{c}{\subset}  \ell^1(\ell^\infty)$ and thus $\ell^{p_1},\ell^{p_2} \underset{c}{\subset}  \ell^1(\ell^\infty)$. As $p_1, p_2 \neq 1$, this is impossible by \cite[Theorem 3.2]{CM}. 

$(ii)$ By Corollary \ref{basic}$(ii)$ and Lemma \ref{cool}$(ii)$, we have that $\D_{L^{p_1} \widehat{\otimes}_\pi L^{p_2}}  \cong \D_{L^{q_1} \widehat{\otimes}_\pi L^{q_2}}$ if and only if $\ell^{p_1} \widehat{\otimes}_\pi \ell^{p_2}  \cong \ell^{q_1} \widehat{\otimes}_\pi \ell^{q_2}$. The result therefore follows from Lemma \ref{cool}$(iii)$.
\end{proof}

Finally, we show how Corollary \ref{basic} may also be used to discuss the isomorphic structure of  the spaces $\mathcal{D}_{L^p}$, $\dot{\mathcal{B}}$, $\mathcal{B}$ and  $\D_{L^{p_1}(L^{p_2})}$. To the best of our knowledge, this problem has not yet been considered in the literature. 

\begin{theorem} \mbox{}
\begin{itemize}
\item[$(i)$] None of the spaces $\mathcal{D}_{L^p}$, $1 \leq p < \infty$, $\dot{\mathcal{B}}$, $\mathcal{B}$ is isomorphic to a complemented subspace of  any other of these spaces. 
\item[$(ii)$] The spaces $\mathcal{D}_{L^{p_1}(L^{p_2})}$,  $1 \leq p_1,p_2 < \infty$, are mutually  non-isomorphic. 
\end{itemize}
\end{theorem}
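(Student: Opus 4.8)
The plan is to push both statements down to the level of the Banach building blocks via the sequence space representations of Example~\ref{examples-main}, and then to invoke classical Banach space theory through Corollary~\ref{basic}. Rewriting $X \widehat{\otimes} s \cong s(X)$ by \eqref{tensor-1}, Example~\ref{examples-main} gives $\D_{L^p} \cong s(\ell^p)$ for $1 \le p < \infty$, $\dot{\mathcal{B}} \cong s(c_0)$, $\mathcal{B} \cong s(\ell^\infty)$, and $\D_{L^{p_1}(L^{p_2})} \cong s(\ell^{p_1}(\ell^{p_2}))$. The whole argument then consists in stripping off the factor $s$ with Corollary~\ref{basic} and reducing to facts about $\ell^p$, $c_0$, $\ell^\infty$, and $\ell^{p_1}(\ell^{p_2})$.

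For $(i)$, I fix two distinct members of the list, say $\D_A \cong s(Y)$ and $\D_B \cong s(X)$ with $X, Y \in \{\ell^p : 1 \le p < \infty\} \cup \{c_0, \ell^\infty\}$ and $X \neq Y$, and suppose $\D_A \underset{c}{\subset} \D_B$, that is, $s(Y) \underset{c}{\subset} s(X)$. If $X = \ell^p$ or $X = c_0$, the self-reproducing identities $\ell^p(\ell^p) \cong \ell^p$ and $c_0(c_0) \cong c_0$ let me apply Corollary~\ref{basic}$(i)$ directly to get $Y \underset{c}{\subset} X$. If $X = \ell^\infty$, the hypotheses of Corollary~\ref{basic} are unavailable (for no $1 \le p < \infty$ is $\ell^p(\ell^\infty) \cong \ell^\infty$, and $c_0(\ell^\infty) \not\cong \ell^\infty$), so instead I would rerun the proof of Corollary~\ref{basic}$(i)$ with the defining fundamental system $\{\, \|\cdot\|_{\ell^\infty_{(1+|\cdot|)^r}(\ell^\infty)} : r > 0 \,\}$ of $s(\ell^\infty)$, whose local Banach spaces are isomorphic to $\ell^\infty_{(1+|\cdot|)^r}(\ell^\infty) \cong \ell^\infty$; Lemma~\ref{key} again yields $Y \underset{c}{\subset} \ell^\infty$. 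In each case $Y \underset{c}{\subset} X$ with $X \neq Y$ is impossible: $\ell^p$ $(1 \le p < \infty)$ and $c_0$ are prime (Pe\l czy\'nski) and $\ell^\infty$ is prime (Lindenstrauss), so an infinite-dimensional complemented subspace of $X$ is isomorphic to $X \not\cong Y$; for the pairs involving $\ell^\infty$ one may alternatively use separability (with Phillips' theorem that $c_0$ is uncomplemented in $\ell^\infty$), as $\ell^p$ and $c_0$ are separable while $\ell^\infty$ is not.

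For $(ii)$, each of $\ell^{p_1}(\ell^{p_2})$ and $\ell^{q_1}(\ell^{q_2})$ is self-reproducing, since $\ell^{p_1}(\ell^{p_1}(\ell^{p_2})) \cong \ell^{p_1}(\ell^{p_2})$, so by Corollary~\ref{basic}$(ii)$ the isomorphism $\D_{L^{p_1}(L^{p_2})} \cong \D_{L^{q_1}(L^{q_2})}$ forces $\ell^{p_1}(\ell^{p_2}) \cong \ell^{q_1}(\ell^{q_2})$; it therefore suffices to show that the mixed norm spaces $\ell^{p_1}(\ell^{p_2})$, $1 \le p_1, p_2 < \infty$, are pairwise non-isomorphic. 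I would do this in two steps. First, the unordered pair $\{p_1, p_2\}$ is an isomorphic invariant: $\ell^{p_1}$ and $\ell^{p_2}$ both embed complementably in $\ell^{p_1}(\ell^{p_2})$ (as the outer summation space and as a single inner coordinate), whereas conversely $\ell^r \underset{c}{\subset} \ell^{p_1}(\ell^{p_2})$ forces $r \in \{p_1, p_2\}$ by the results of \cite{CM}; this yields $\{p_1, p_2\} = \{q_1, q_2\}$. Second, one must recover the order of the exponents, i.e.\ exclude $\ell^p(\ell^q) \cong \ell^q(\ell^p)$ for $p \neq q$, for which I would invoke the isomorphic classification of mixed norm sequence spaces from \cite{CM}.

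The genuinely hard point is this order-distinction in $(ii)$. All of $(i)$, and the recovery of the unordered pair in $(ii)$, follows mechanically from Corollary~\ref{basic} together with standard primeness and separability results. By contrast, the readily available invariants---type, cotype, and the set of complemented $\ell^r$-subspaces---are all symmetric in $p$ and $q$ and hence cannot detect which exponent governs the outer summation; separating $\ell^p(\ell^q)$ from its transpose $\ell^q(\ell^p)$ is the one genuine Banach space input, which I draw from \cite{CM}.
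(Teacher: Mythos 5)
Your overall strategy---representing the spaces as $s(X)$ with $X\in\{\ell^p,c_0,\ell^\infty,\ell^{p_1}(\ell^{p_2})\}$ via Example \ref{examples-main} and \eqref{tensor-1}, and stripping off the factor $s$ with Lemma \ref{key} and Corollary \ref{basic}---is exactly the paper's, and both your part $(ii)$ and the cases of part $(i)$ where the ambient space is $\mathcal{D}_{L^q}$ or $\dot{\mathcal{B}}$ are correct and essentially identical to the paper's argument. The gap is in your treatment of the ambient space $\mathcal{B}\cong s(\ell^\infty)$. You propose to rerun the proof of Corollary \ref{basic}$(i)$ with the fundamental system $\{\|\cdot\|_{\ell^\infty_{(1+|\cdot|)^r}(\ell^\infty)} : r>0\}$ and claim that the associated local Banach spaces are $\ell^\infty_{(1+|\cdot|)^r}(\ell^\infty)\cong\ell^\infty$. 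This is false: by definition (Lemma \ref{key}) the local Banach space is the \emph{completion} of $s(\ell^\infty)$ under that norm, i.e.\ the closure of $s(\ell^\infty)$ inside $\ell^\infty_{(1+|\cdot|)^r}(\ell^\infty)$. Since every $x\in s(\ell^\infty)$ satisfies $\|x_j\|_{\ell^\infty}(1+|j|)^r\to 0$, and this property passes to uniform limits, that closure is $c_{0,(1+|\cdot|)^r}(\Z^n;\ell^\infty)\cong c_0(\ell^\infty)$, not $\ell^\infty$; density is precisely what fails for the sup-norm, and it is the reason Corollary \ref{basic} carries the hypothesis $\ell^p(X)\cong X$ with $p<\infty$ or $c_0(X)\cong X$. (Compare the paper's proof of \eqref{tensor-1}, which writes $s(\Lambda;X)=\varprojlim_N c_{0,(1+|\cdot|)^N}(\Lambda;X)$: the sup-norm local Banach spaces are the $c_{0,w}$-spaces.)

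With the correct local Banach space your argument does not merely need a different citation; it collapses. For $Y=c_0$ (the claim $\dot{\mathcal{B}}\not\underset{c}{\subset}\mathcal{B}$), Lemma \ref{key} only yields $c_0\underset{c}{\subset}c_0(\ell^\infty)$, which is \emph{true} (project each $\ell^\infty$-block onto its first coordinate), so no contradiction arises. For $Y=\ell^p$ you would need $\ell^p\not\underset{c}{\subset}c_0(\ell^\infty)$, which none of your cited tools (primeness of $\ell^p$, $c_0$, $\ell^\infty$; Phillips' theorem; separability) delivers, since $c_0(\ell^\infty)$ is neither prime nor separable. The paper circumvents this by choosing, for the same target $s(\ell^\infty)$, the fundamental system of $\ell^q$-norms with $q<\infty$, for which density does hold, so the local Banach spaces genuinely are $\ell^q_{(1+|\cdot|)^r}(\ell^\infty)\cong\ell^q(\ell^\infty)$: taking $q\neq p$ gives $\ell^p\underset{c}{\subset}\ell^q(\ell^\infty)$, excluded by \cite[Theorem 3.2]{CM}, and taking $q=1$ gives $c_0\underset{c}{\subset}\ell^1(\ell^\infty)$, excluded by the remark after \cite[Theorem 4.7]{BCLT}. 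These two nontrivial inputs on complemented subspaces of $\ell^q(\ell^\infty)$ are indispensable for the $\mathcal{B}$ case and are missing from your proposal.
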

\begin{proof}
We again employ the sequence space representations from Example \ref{examples-main}.

$(i)$ We first show that  $\mathcal{D}_{L^p}$, $\dot{\mathcal{B}}$, and $\mathcal{B}$ are not isomorphic to a complemented subspace of any of the spaces $\mathcal{D}_{L^q}$, $1 \leq q < \infty$, $\dot{\mathcal{B}}$ (except for the space itself). By Corollary \ref{basic}$(i)$, this follows from the corresponding result for  $\ell^p$, $c_0$, and $\ell^\infty$ (cf.\ \cite[Corollary 2.1.6]{AK}).
Next, we show that  $\mathcal{D}_{L^p}$ and $\dot{\mathcal{B}}$  are not isomorphic to a complemented subspace of $\mathcal{B}$.  Suppose that $s(\ell^p)  \underset{c}{\subset} s(\ell^\infty)$.  Choose $1 \leq q < \infty$ with $p \neq q$. Proceeding as in the proof of Corollary \ref{basic}$(i)$, we obtain that $\ell^p   \underset{c}{\subset} \ell^q(\ell^\infty)$, which is false by \cite[Theorem 3.2]{CM}. Next, suppose that $s(c_0)  \underset{c}{\subset} s(\ell^\infty)$. As before, we may deduce that  $c_0  \underset{c}{\subset} \ell^1(\ell^\infty)$, which is false by the remark after \cite[Theorem 4.7]{BCLT}. 

$(ii)$ This follows from Corollary \ref{basic}$(ii)$ and the fact that the spaces  $\ell^{p_1}(\ell^{p_2})$, $1 \leq p_1,p_2 < \infty$, are mutually non-isomorphic \cite[Theorem 1.1]{CM}.
\end{proof}

\end{document}